\newcommand{\R}{\mathbb{R}}
\newtheorem{theorem}{Theorem} [section]
\newtheorem{proposition}[theorem]{Proposition}
\newtheorem{remark}[theorem]{Remark}
\newtheorem{lemma}[theorem]{Lemma}
\newtheorem{definition}[theorem]{Definition}
\newtheorem{corollary}[theorem]{Corollary}
\newenvironment{proof}[1][Proof]{\noindent \textbf{#1.} }
{\  \rule{0.5em}{0.5em}\par \medskip}
\title{PDEs in moving time dependent domains
\thanks{Partially supported by Project MTM2009--07540, MEC and   GR58/08
  Grupo 920894, UCM, Spain.}
}
\author{Fernando Cortéz \\ An\'{\i}bal Rodr\'{\i}guez-Bernal${}^{\dag}$}
\date{}
\begin{document}

\maketitle

\begin{center}
Departamento de Matem\'atica Aplicada \\
Universidad Complutense de Madrid, \\ Madrid 28040,  SPAIN \\ and \\
${}^{\dag}$ Instituto de Ciencias Matem\'aticas \\
CSIC-UAM-UC3M-UCM
\end{center}

\begin{center}
Dedicated to Professor M.G. Velarde\\
 in occasion of his 70th birthdate.
  
\end{center}

\begin{abstract}

In this work we study partial differential equations defined in a
domain that moves in time according to the flow of a given ordinary
differential equation, starting out of a given initial domain. We
first derive a formulation for a particular case of partial
differential equations known as balance equations. For this kind of
equations we find the equivalent partial differential equations in the
initial domain and later we study some particular cases with and
without diffusion. We also analyze general second order differential
equations, not necessarily of balance type. The equations without
diffusion are solved using the characteristics method. We also prove
that the diffusion equations, endowed with Dirichlet boundary
conditions and initial data, are well posed in the moving domain. For
this we show that the principal part of the equivalent equation in the
initial domain is uniformly elliptic. 
We then prove a version of the weak  maximum principle for an equation in a moving domain.
Finally we perform suitable energy estimates in the moving domain and
give sufficient conditions for the solution to converge to zero as
time goes to infinity 

\end{abstract}

\section{Introduction}

In a standard setting for many partial differential equations of
mathematical physics, one
usually assumes that the physical process being described occurs in a
fixed domain of the physical space. This includes many equations
describing the motion of fluids for example, despite the fact that
particle fluids and hence fluid subdomains actually move with time. Of
course there are some other problems,such as free boundary problems,
in which the physical domain of the PDE changes with time. In all
these problems the motion of particles or subdomains occurs according
to an unknown velocity field with is actually one of the main unknowns
of the problem.

In this paper we assume some intermediary situation in which each
point of a given initial domain $\Omega_{0}\subset \R^{n}$, moves in
time according to some prescribed autonomous vector field. Hence at
later times the domain $\Omega_{0}$ evolves into a diffeomorphic
domain $\Omega (t)$ (which is not excluded to coincide with
$\Omega_{0}$ itself!). In particular, topological properties of the
domain are preserved along time. However the geometrical evolution of
the domain can be very complex; for example one can consider the
evolution of the open set  $\Omega_{0}$ in $\R^{3}$ with the vector field of the
Lorenz equations in a chaotic regime. 

Our goal is the to describe some sensible class of PDEs to be consider
in such a family of moving domains. We choose then to describe balance
equations in moving domains, which result from conservation principles
and which have natural applications to conservation of mass, momentum,
energy etc. For such equations one must then give some suitable
definition of  solution.  

After giving a convenient meaning of solution for both balance and
general parabolic equations, we prove that such equations can be
solved using available results. 

Then we investigate, on some particular, although significative
examples of equations in moving domains,  basic tools in the analysis
of parabolic equations such as the (weak) maximum principle and
energy estimates. In particular we obtain sufficient conditions on the
equations and on the moving domains, that guarantee that the
solutions converge to zero as time goes to  infinity. 
  
\section{Moving domains}

We assume that each point $x$ of an  original given domain (smooth
open set)  $\Omega_0
\subset \R^{n}$,
starting at time $t=0$ moves following a curve $t \  \longmapsto
Y(t;x)$, in $\R^{n}$. Moreover we assume this curve is a solution of
the autonomous system of ODEs 
\begin{equation}
 \left\{
\begin{array}{l}
\dot{Y}(t;x)= \vec{V}(Y(t;x)) \\
Y(0;x)=x 
\end{array}\right.
\label{EBDSVT1}
\end{equation}
for some given smooth vector velocity field $\vec{V} : \R^{n}
\longrightarrow  \R^{n}$. 
Even more and for simplicity we assume that all solutions of
(\ref{EBDSVT1}) are defined for all $t\in \R$.

Hence, for $t\in \R$,  we have a deformation map 
\begin{displaymath}
\phi(t) :  \R^{n} \longrightarrow \R^{n}, \quad \phi(t)z = Y(t;z)
\end{displaymath} 
which is a diffeormorphism that  satisfies the group properties
$\phi(0)= I$, $\phi(t+s)=\phi(t)\circ \phi(s)$ for all $t,s \in
\R$. In particular $\phi(-t)$ is the inverse of $\phi(t)$.

Therefore, the original domain $\Omega_{0}$ is deformed into the
domains 
\begin{displaymath} \label{22}
\Omega(t) = \phi(t) \Omega_0  \ \ \ \ \ \ t \in \R
\end{displaymath}
and the boundaries satisfy $\partial\Omega(t) =\phi(t) \ \partial \Omega_0$.
Also, any smooth subdomain  $W_0$  of  $\Omega_0$ is also deformed
into 
\begin{displaymath}
W(t) = \phi(t) W_0, \quad  t \in \R
\end{displaymath}
and its boundary is given by $\partial W(t) = \phi(t) \partial W_0$.

The next results gives geometrical information about the deformations
above. 
\begin{lemma} \label{erestu}

With the above notations, for  $x_0 \in \partial \Omega_0$ then
$\phi(t)x_0 \in \Omega(t)$ and 
\begin{displaymath}
D\phi(t)(x_0)
\end{displaymath}   
is an isomorphism in  $\R^{n}$ that transforms the tangent plane in
$x_0\in \partial\Omega_0$, that we denote  $T_{x_0} \partial\Omega_0$,
into  the tangent plane to  $\partial\Omega(t)$ at  $\phi(t)x_0$,
$T_{\phi(t)x_0} \partial\Omega(t)$. 

\end{lemma}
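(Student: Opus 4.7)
The plan is to read the first assertion as $\phi(t)x_0 \in \partial\Omega(t)$ (the printed ``$\phi(t)x_0\in\Omega(t)$'' looks like a typo), which is immediate from the fact that $\phi(t)$ is a diffeomorphism of $\R^n$ with inverse $\phi(-t)$: diffeomorphisms carry topological boundaries to topological boundaries, and the preceding text already records $\partial\Omega(t)=\phi(t)\partial\Omega_0$. So nothing is required here beyond the group properties of the flow that have already been set up.

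For the second assertion, that $D\phi(t)(x_0)$ is a linear isomorphism of $\R^n$, I would simply differentiate the identity $\phi(-t)\circ\phi(t)=\mathrm{Id}$ at $x_0$ by means of the chain rule, obtaining
\[
D\phi(-t)(\phi(t)x_0)\cdot D\phi(t)(x_0) = I_{\R^n},
\]
which exhibits the inverse explicitly. The smoothness of $\vec{V}$ and the standard theorem on smooth dependence of solutions of ODEs on initial data guarantee that $\phi(t)$ is $C^1$ (in fact as smooth as $\vec{V}$), so the chain rule is applicable.

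To treat the tangent planes, I would use the curve characterisation of $T_{x_0}\partial\Omega_0$: given $v\in T_{x_0}\partial\Omega_0$, choose a smooth curve $\gamma:(-\eps,\eps)\to\partial\Omega_0$ with $\gamma(0)=x_0$ and $\gamma'(0)=v$. The composition $\tilde\gamma(s):=\phi(t)\gamma(s)$ lies in $\phi(t)\partial\Omega_0=\partial\Omega(t)$, passes through $\phi(t)x_0$ at $s=0$, and has velocity $\tilde\gamma'(0)=D\phi(t)(x_0)\,v$, so this vector belongs to $T_{\phi(t)x_0}\partial\Omega(t)$. This proves the inclusion
\[
D\phi(t)(x_0)\bigl(T_{x_0}\partial\Omega_0\bigr)\subseteq T_{\phi(t)x_0}\partial\Omega(t),
\]
and since $D\phi(t)(x_0)$ is injective by the previous paragraph and both tangent spaces have dimension $n-1$, the inclusion is an equality. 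No step here is really the ``hard step'': once the diffeomorphism/group property of $\phi(t)$ is in place, everything reduces to the chain rule and the curve description of tangent vectors; the mildest care needed is to keep ``tangent plane'' identified with the linear subspace of $\R^n$ that the curve computation produces, rather than with an affine hyperplane through the base point.
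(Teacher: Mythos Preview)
Your proof is correct and follows the same route as the paper: take a curve $z(s)$ in $\partial\Omega_0$ through $x_0$, push it forward by $\phi(t)$, and read off the tangent vector at $\phi(t)x_0$ via the chain rule. Your version is in fact more complete than the paper's, which leaves the invertibility of $D\phi(t)(x_0)$ and the reverse inclusion (your dimension-counting step) implicit; your explicit use of $\phi(-t)\circ\phi(t)=\mathrm{Id}$ and the equality of dimensions fills those gaps cleanly.
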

\begin{proof}
Just note that if  $z(s)$ is a curve in  $\partial\Omega_0$ with
$z(0)= x_0$, then  $z'(0)=v_0$ is a tangent vector at $x_{0}$ (and
conversely). Hence,  $w(s)= \phi(t)(z(s))$ is a curve in  $
\partial\Omega(t)$, with  $w(0)=y_0$ and 
\begin{displaymath}
w'(0)=D\phi(t)(x_0) v_0
\end{displaymath} 
is a tangent vector at  $\partial\Omega(t)$ .
\end{proof}

We also recall the following 

\begin{definition}
  A matrix  $\eta(t)$ is a fundamental matrix of the linear system 
\begin{equation}
X'(t)=A(t) X(t) 
\label{gioo}
\end{equation}
iff each column of  $\eta(t)$ is a solution of  (\ref{gioo}) and
$\eta(t)$ is nonsingular. 
\end{definition}
Observe that in particular, $\eta'(t)=A(t)\eta(t)$.   Then we have 

\begin{lemma} 

If  $\eta(t)$ is a fundamental matrix of  (\ref{gioo}), then 
\begin{displaymath}
\gamma(t)=\left( \displaystyle\eta^{-1}(t) \right)^{*}=
\left(\displaystyle\eta^{*}(t)\right)^{-1} 
\end{displaymath}
is a fundamental matrix of the adjoint system 
\begin{displaymath}
Y'(t)=-A^{*}(t) Y(t)
\end{displaymath}
where  * denotes the adjoint matrix. 
\end{lemma}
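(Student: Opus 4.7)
The plan is to verify directly the two defining properties of a fundamental matrix for $\gamma(t)$: that it is nonsingular, and that its columns solve the adjoint system. Nonsingularity is immediate, since $\gamma(t) = (\eta^*(t))^{-1}$ is the inverse of an invertible matrix, so all the work lies in checking that $\gamma'(t) = -A^*(t)\gamma(t)$.

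First I would compute the derivative of $\eta^{-1}(t)$. Starting from the identity $\eta(t)\eta^{-1}(t) = I$ and differentiating, one gets
\begin{displaymath}
\eta'(t)\eta^{-1}(t) + \eta(t)\bigl(\eta^{-1}(t)\bigr)' = 0,
\end{displaymath}
and, using $\eta'(t) = A(t)\eta(t)$, solving for $(\eta^{-1})'$ yields
\begin{displaymath}
\bigl(\eta^{-1}(t)\bigr)' = -\eta^{-1}(t)A(t).
\end{displaymath}

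Next I would take adjoints on both sides. Since the adjoint operation is linear and reverses the order of products, and since $\gamma(t) = \bigl(\eta^{-1}(t)\bigr)^{*}$, this gives
\begin{displaymath}
\gamma'(t) = \bigl((\eta^{-1})'(t)\bigr)^{*} = -A^{*}(t)\bigl(\eta^{-1}(t)\bigr)^{*} = -A^{*}(t)\gamma(t),
\end{displaymath}
so each column of $\gamma(t)$ is a solution of $Y'(t) = -A^{*}(t)Y(t)$. Combined with the nonsingularity of $\gamma(t)$, this is exactly what is needed to conclude that $\gamma(t)$ is a fundamental matrix of the adjoint system.

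There is no real obstacle here; the only subtle point is being careful with the order of factors when taking adjoints (so that $A(t)$ ends up on the left rather than on the right), and noting that the two expressions $(\eta^{-1})^{*}$ and $(\eta^{*})^{-1}$ agree, which follows from applying the adjoint to $\eta\eta^{-1}=\eta^{-1}\eta = I$.
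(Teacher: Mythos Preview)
Your proof is correct and follows exactly the approach sketched in the paper: differentiate the identity $\eta^{-1}(t)\eta(t)=I$, use $\eta'(t)=A(t)\eta(t)$, and then take adjoints. You have simply written out in full the computation that the paper compresses into a single sentence.
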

\begin{proof}
Differentiate in 
\begin{displaymath}
\eta^{-1}(t) \circ \eta(t)= I 
\end{displaymath}
and use  (\ref{gioo}).  
\end{proof}

The following result is obtained from classical results in ODEs, see
\cite{H}.

\begin{proposition}
\mbox{}

\noindent i) For $x \in \R^{n}$,  $D\phi(t)x$ is a fundamental matrix of 
\begin{displaymath} \label{geova1}
  \dot{Z}(t)=A(t) Z(t)
\end{displaymath}
and $D\phi(0)=I$, where $A(t)=D\vec{V}(\phi(t)x)$. 

\noindent ii) Denote 
\begin{displaymath}
 \left | K(x,t) \right |= det(D\phi(t)x), \quad x \in \R^{n} ¡
\end{displaymath}
then we have the  \textbf{Abel--Liouville--Jacobi} formula 
\begin{displaymath} \label{EBDSVT2}
\frac{\partial}{\partial t} \left | K(x,t) \right | = tr\big( D\vec{V}\big) (\phi(t)x)  \left | K(x,t) \right | 
= div \big(  \vec{V}\big) (\phi(t)x) \left | K(x,t) \right |  
\end{displaymath}
hence 
\begin{displaymath} \label{geova}
\left | K(x,t) \right |= e^{\int_{0}^{t} div\vec{V}(\phi(s)x)\, ds}. 
\end{displaymath}

In particular, for  $t\in[-T,T]$ there exist $C_1(T),C_2(T)$ 
such that 
\begin{equation}
0 < C_1(T)\leq \left | K(x,t) \right | \leq C_2(T) \ \ \ \forall x \in \Omega_{0} \ \ \forall  t \in [0,T].
\label{ACDC}
\end{equation}
\end{proposition}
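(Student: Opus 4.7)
The plan is to establish (i) by differentiating the flow map with respect to the initial data, deduce (ii) from the classical Abel--Liouville identity applied to the resulting variational equation, and finally obtain (\ref{ACDC}) by bounding the exponent uniformly on a compact set.

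For part (i), I would invoke the smooth dependence of the solution of (\ref{EBDSVT1}) on the initial condition $x$ (see \cite{H}). Differentiating both sides of $\partial_t \phi(t)x = \vec V(\phi(t)x)$ with respect to $x$ commutes with $\partial_t$, and the chain rule produces the variational equation
\begin{displaymath}
\frac{\partial}{\partial t}\bigl(D\phi(t)x\bigr) = D\vec V(\phi(t)x)\,D\phi(t)x,
\end{displaymath}
so each column of $D\phi(t)x$ solves $\dot Z=A(t)Z$ with $A(t)=D\vec V(\phi(t)x)$. Since $\phi(0)=I$, we have $D\phi(0)x=I$, which is nonsingular; nonsingularity for every $t$ then either follows from the Abel--Liouville formula computed in the next step, or, more directly, from the fact that $\phi(t)$ is a diffeomorphism with inverse $\phi(-t)$, so $D\phi(t)x \cdot D\phi(-t)(\phi(t)x)=I$.

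For part (ii), I would apply the standard Abel--Liouville--Jacobi identity: if $\eta(t)$ is any fundamental matrix of $\dot Z=A(t)Z$, then $\frac{d}{dt}\det\eta(t)=\tr(A(t))\det\eta(t)$. Taking $\eta(t)=D\phi(t)x$ and using $\tr(D\vec V)=\mathrm{div}\,\vec V$ yields
\begin{displaymath}
\frac{\partial}{\partial t}|K(x,t)|=\mathrm{div}\,\vec V(\phi(t)x)\,|K(x,t)|,
\end{displaymath}
which is the Abel--Liouville--Jacobi formula. Integrating this scalar linear ODE in $t$ with initial condition $|K(x,0)|=\det I=1$ produces the exponential representation. (In passing this shows $|K(x,t)|>0$ for every $t$, so writing the determinant with absolute value bars is consistent and (i) follows without circularity.)

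For the bound (\ref{ACDC}), I would note that on the compact time interval $[-T,T]$, the continuous map $(s,x)\mapsto \phi(s)x$ sends bounded subsets of $\mathbb R^n$ to bounded sets; applied to $\Omega_0$ (for which the relevant domains are assumed to give a compact orbit, e.g.\ if $\Omega_0$ is bounded), the continuous function $(s,x)\mapsto \mathrm{div}\,\vec V(\phi(s)x)$ is uniformly bounded, say by some $M(T)$. Then
\begin{displaymath}
\left|\int_0^t \mathrm{div}\,\vec V(\phi(s)x)\,ds\right|\le T\,M(T),
\end{displaymath}
and exponentiating gives $C_1(T)=e^{-TM(T)}$ and $C_2(T)=e^{TM(T)}$. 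The only real issue in the whole argument is justifying differentiation under the ODE flow in part (i), which is exactly the content of the classical $C^1$-dependence theorem; once that is granted, the rest is a direct application of Abel--Liouville and a compactness bound.
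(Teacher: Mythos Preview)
Your proof is correct and is exactly the standard argument: differentiate the flow equation in the initial data to obtain the variational equation, apply Abel--Liouville to its fundamental matrix, and bound the exponent by continuity on a compact set. The paper itself does not give any proof of this proposition; it simply states that ``the following result is obtained from classical results in ODEs, see \cite{H}'' and records the statement. So your write-up supplies precisely the details that the paper defers to Hartman.
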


\begin{remark}

Observe that if  $W_0 \subset \Omega_0$ and  $W(t)= \phi(t)W_0$ then
the measure  of  $W(t)$ satisfies 
\begin{displaymath}
\left | W(t) \right | = \displaystyle\int_{W(t)}^{} 1  \, dy =
\displaystyle\int_{W_0}^{} \left | K(x,t) \right |  \, dx
=\displaystyle\int_{W_0}^{}   e^{\int_{0}^{t} div
  \vec{V}(\phi(s)(x))\, ds} \, dx   . 
\end{displaymath}

In particular, if  $div(\vec{V})=0$ then the measure is preserved,
that is, 
\begin{displaymath}
\left | W(t) \right | = \left |  W_0 \right |   \ \ \ \ \ \ \ \forall
\  W_0 \subset \Omega_0 \ \ \ \ \forall t\in \R . 
\end{displaymath}
Also, if $ div(\vec{V}) \leq - d_0 < 0$ at every point, then 
\begin{displaymath}
 \left | W(t) \right | \leq \left |  W_0 \right | \ \  e^{-d_0 t}
\end{displaymath}
and we say the flow of (\ref{EBDSVT1}) is contractive. 

Finally if   $div(\vec{V})  \geq  d_0 > 0$ at every point, then 
\begin{displaymath}
 \left | W(t) \right | \geq \left |  W_0 \right | \ \  e^{d_0 t}
\end{displaymath}
and we say the flow if expansive.

For example for a linear flow, that is,  $\vec{V}(x) = M x$ for a
given matrix  $M$, we have  
\begin{displaymath}
div(\vec{V})= tr(M)= \displaystyle\sum_{i=1}^n \mu_i = d_0
\end{displaymath}   
is the trace of $M$, that is the sum of all eigenvalues of $M$.

\end{remark}

Then we have the following result that complements Lemma
\ref{erestu}. 

\begin{corollary} \label{Angie}
Assume  $ x_0 \in \partial\Omega_0$ and  consider  $y_0 =
\phi(t)x_{0} \in\partial\Omega(t)$. Then if  $\vec{n}(x_0)$ is an unitary  outward
normal vector  to  $\Omega_0$ at  $x_0$ then 
\begin{displaymath}
 N(y_0) =((D\phi(t)x_0)^{*})^{-1} \vec{n}(x_0)
\end{displaymath}
is an outward vector at  $y_0$. That is,  $((D\phi(t))^{*} x_0)^{-1}$
is a linear isomorphism in  $\R^{n}$ that transforms the normal space
at  $x_0 \in \partial\Omega_0$, which we denote,  $N_{x_0}$, into the
normal space to  $\Omega(t)$ at  $y_0\in \partial\Omega(t)$, which we
denote  $N_{y_0}$.

\end{corollary}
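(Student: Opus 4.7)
The plan is to verify that $N(y_0)=((D\phi(t)(x_0))^*)^{-1}\vec{n}(x_0)$ satisfies two things: (a) it is orthogonal to the tangent space $T_{y_0}\partial\Omega(t)$, and (b) it points into the exterior half-space of $\Omega(t)$ at $y_0$. Once (a) is established, since the normal line at $y_0$ is one-dimensional, $N(y_0)$ must be a nonzero scalar multiple of the outward unit normal $\vec n(y_0)$, and then (b) fixes the sign; the bijection statement $N_{x_0}\to N_{y_0}$ follows at once because $((D\phi(t)(x_0))^*)^{-1}$ is a linear isomorphism of $\R^n$.

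For (a), I would invoke Lemma \ref{erestu} to describe an arbitrary tangent vector at $y_0$ as $w_0=Av_0$ with $A:=D\phi(t)(x_0)$ and $v_0\in T_{x_0}\partial\Omega_0$. Using $(A^*)^{-1}=(A^{-1})^*$ and the adjoint identity $\langle u,Bv\rangle=\langle B^*u,v\rangle$, the pairing collapses to
\[
\langle N(y_0),w_0\rangle=\langle (A^*)^{-1}\vec{n}(x_0),Av_0\rangle=\langle \vec{n}(x_0),A^{-1}Av_0\rangle=\langle \vec{n}(x_0),v_0\rangle=0,
\]
because $\vec{n}(x_0)\perp T_{x_0}\partial\Omega_0$ by hypothesis. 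Thus $N(y_0)$ is indeed normal to $\partial\Omega(t)$ at $y_0$.

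For (b), I would produce an explicit inward curve whose sign against $N(y_0)$ can be computed. Since $\vec{n}(x_0)$ is the outward unit normal at $x_0$, the segment $s\mapsto x_0-s\vec{n}(x_0)$ lies in $\Omega_0$ for small $s>0$, hence $\gamma(s):=\phi(t)(x_0-s\vec{n}(x_0))$ lies in $\Omega(t)$ with $\gamma(0)=y_0$. Consequently $\gamma'(0)=-A\vec{n}(x_0)$ is an admissible inward velocity at $y_0$, so its component along any outward normal must be non-positive. I then compute
\[
\langle N(y_0),\gamma'(0)\rangle=-\langle (A^*)^{-1}\vec{n}(x_0),A\vec{n}(x_0)\rangle=-\langle \vec{n}(x_0),\vec{n}(x_0)\rangle=-1<0,
\]
which shows $N(y_0)$ lies in the normal half-space opposite to $\gamma'(0)$, i.e.\ in the outward half-space.

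The only delicate point is the orientation argument (b): the perpendicularity in (a) is essentially a one-line adjoint manipulation, but deciding between inward and outward cannot be read off the algebra alone, which is why I think the cleanest route is to exhibit a concrete inward curve and evaluate the pairing, as above. A continuity-in-$t$ argument (starting from $\phi(0)=I$ and noting that $\langle N(y_0),\vec n(y_0)\rangle$ never vanishes) would also work, but the curve computation is local and self-contained.
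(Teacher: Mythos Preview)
Your argument is correct, and part (a) is exactly the paper's proof: both use Lemma~\ref{erestu} to write tangent vectors at $y_0$ as $D\phi(t)(x_0)\vec{\tau}$ and then move the linear map across the inner product via the adjoint, reducing orthogonality at $y_0$ to orthogonality at $x_0$. The paper stops there; it does not verify the orientation claim (``outward'') at all. Your part (b), which tests $N(y_0)$ against the velocity of the pushed-forward inward segment $\gamma(s)=\phi(t)(x_0-s\vec{n}(x_0))$ and obtains $\langle N(y_0),\gamma'(0)\rangle=-1$, is a genuine addition that fills a gap the paper leaves implicit. So your approach is not different in spirit, just more complete: same adjoint identity for normality, plus a clean local computation to pin down the sign.
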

\begin{proof}
From Lemma  \ref{erestu} a normal vector at  $y_0=\phi(t)(x_0)\in
\partial\Omega(t)$ , $\vec{n}$, must satisfty 
\begin{displaymath}
<\vec{n} , D\phi(t)x_0 \vec{\tau}>= 0 \ \ \ \forall \vec{\tau}\in T_{x_0} \partial\Omega_0   
\end{displaymath}  
which reads  
\begin{displaymath}
<(D\phi(t)x_0)^{*} \vec{n} , \vec{\tau} >= 0 \ \ \ \forall
\vec{\tau}\in T_{x_0} \partial\Omega_0.   
\end{displaymath}
Hence we can take  $\vec{n}$ such that $((D\phi(t)x_0)^{*}) \vec{n} =
\vec{n}(x_0)$ 
which gives the result. 
\end{proof}

\section{Balance equations}

The following notations will be used throughout the paper. 

\begin{definition}
If  for some $T>0$, $f$ is defined in 
\begin{displaymath}
f :\cup_{t \in (-T,T)} \Omega(t) \times \{t\} \longrightarrow \R,
\quad (y,t) \longmapsto f(y,t)
\end{displaymath}
then we define  $\overline{f}$ in $\Omega_{0}$ as 
\begin{displaymath}
\overline{f} :\Omega_0 \times (-T,T) \longrightarrow \R, \quad 
\overline{f}(x,t)=f(\phi(t)x,t)
\end{displaymath}

\end{definition}

Consider  $W(t)=\phi(t)W_0\subset \Omega(t)$, a sufficiently smooth
region with boundary $\partial W(t)$. Then the time variation of the
amount of $T$ in $W(t)$ is given by 
\begin{displaymath}
\frac{d}{d t}\displaystyle\int_{W(t)}^{} T(y,t)\, dy
\end{displaymath} 
which is computed below. Note that this is the classical Reynolds
Transport theorem, \cite{LOA,DG,CAJ}.

\begin{proposition}
With the notations above, we have that 
\begin{displaymath}
\frac{d}{d t} \displaystyle\int_{W(t)}^{} T(y,t)\, dy 
\end{displaymath}
can be written by either one of the following equivalent expressions 
\begin{equation}
\displaystyle\int_{W_0}^{} \frac{\partial \ \overline{T}}{\partial
  t}(x,t) \left | K(x,t) \right | \, dx + \displaystyle\int_{W_0}^{}
\overline{T}(x,t) \ \overline{ div \vec{V}}(x,t) \left | K(x,t) \right
|\, dx  
\label{EBDSV3}    
\end{equation}
 or 
\begin{equation}
\displaystyle\int_{W(t)}^{}\frac{\partial T}{\partial t}(y,t)\, dy +
\displaystyle\int_{W(t)}^{} div_y(T(y,t) \ .\ \vec{V}(y))\, dy   
\label{EBDSV4}
\end{equation}
or 
\begin{equation}
\displaystyle\int_{W(t)}^{} \frac{\partial T}{\partial t}(y,t) \, dy
+\displaystyle\int_{\partial W(t)}^{} T(y,t) \vec{V}(y)   \,
d\vec{s}. 
\label{EBDSV5}
\end{equation}
\end{proposition}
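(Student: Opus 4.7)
The plan is to start from the change of variables formula $y = \phi(t)x$, which converts
\[
\frac{d}{dt}\int_{W(t)} T(y,t)\,dy \;=\; \frac{d}{dt}\int_{W_0} \overline{T}(x,t)\,|K(x,t)|\,dx.
\]
Since $W_0$ is independent of $t$, I can differentiate under the integral sign (using the continuity and local boundedness of $\overline{T}$, $\partial_t \overline{T}$, $|K|$, and $\partial_t |K|$, together with the uniform bound \eqref{ACDC} on $|K|$ to dominate on compact time intervals). Applying Leibniz and the product rule produces two terms, and the second one is handled by invoking the Abel--Liouville--Jacobi formula from the preceding proposition, namely $\partial_t |K(x,t)| = \overline{\mathrm{div}\vec V}(x,t)\,|K(x,t)|$. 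This yields expression \eqref{EBDSV3} directly.

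To obtain \eqref{EBDSV4}, I would undo the change of variables in \eqref{EBDSV3}. The key observation is the chain rule identity
\[
\frac{\partial \overline T}{\partial t}(x,t) \;=\; \frac{\partial T}{\partial t}(\phi(t)x,t) + \nabla_y T(\phi(t)x,t)\cdot \vec V(\phi(t)x),
\]
which uses the ODE $\dot Y(t;x)=\vec V(Y(t;x))$ from \eqref{EBDSVT1}. Substituting this back and using the change of variables $y=\phi(t)x$ (which eats the factor $|K(x,t)|$) turns \eqref{EBDSV3} into
\[
\int_{W(t)} \frac{\partial T}{\partial t}(y,t)\,dy + \int_{W(t)}\bigl(\nabla_y T(y,t)\cdot \vec V(y) + T(y,t)\,\mathrm{div}\,\vec V(y)\bigr)\,dy,
\]
and the bracketed expression is exactly $\mathrm{div}_y(T(y,t)\vec V(y))$, yielding \eqref{EBDSV4}.

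Finally, \eqref{EBDSV5} follows from \eqref{EBDSV4} by the divergence theorem applied to the vector field $y\mapsto T(y,t)\vec V(y)$ on the smooth region $W(t)$, which converts the volume integral of $\mathrm{div}_y(T\vec V)$ into the flux integral $\int_{\partial W(t)} T(y,t)\vec V(y)\cdot d\vec s$.

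The only mildly delicate step is justifying differentiation under the integral sign at the very start, but the smoothness of $\vec V$, $\phi$, and $T$, together with the two-sided bound \eqref{ACDC} on $|K(x,t)|$, makes this routine on any compact subinterval of $(-T,T)$. Everything else reduces to the chain rule, the Abel--Liouville--Jacobi formula, and the divergence theorem, so I expect no further obstacle.
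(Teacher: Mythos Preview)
Your argument is correct and is the standard derivation of the Reynolds Transport Theorem. Note, however, that the paper does not actually supply a proof of this proposition: it is stated without proof, with the remark preceding it that ``this is the classical Reynolds Transport theorem'' and references to \cite{LOA,DG,CAJ}. So there is no paper proof to compare against; your write-up simply fills in what the authors left to the literature, and each step (change of variables to the fixed domain, differentiation under the integral using \eqref{ACDC}, the Abel--Liouville--Jacobi identity for $\partial_t|K|$, the chain rule relating $\partial_t\overline T$ to $\partial_t T + \nabla_y T\cdot\vec V$, and the divergence theorem) is exactly what one expects.
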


Now we will derive the Balance Equations for the quantity $T(y,t)$. In
fact we have 
\begin{displaymath}  \label{EBDSVT8}
 \frac{d}{d t}\displaystyle\int_{W(t)}^{} T(y,t)\, dy =
 \displaystyle\int_{W(t)}^{}f(y,t) \,dy - \displaystyle\int_{\partial
   W(t)}^{} \vec{J} \,d\vec{s}  
\end{displaymath} 
where  $f(y,t)$ represents the rate of production/consumption of $T$
per unit volume in $W(t)$ and    $\vec{J}$  is the vector field of the
flow of $T$ across the boundary of  $W(t)$. Hence the divercence
theorem leads to 
\begin{equation}
 \frac{d}{d t}\displaystyle\int_{W(t)}^{} T(y,t)\, dy =
 \displaystyle\int_{W(t)}^{}f(y,t) \,dy - \displaystyle\int_{W(t)}^{}
 div_y\vec{J} \,dy  
\label{EBDSVT9}
\end{equation} 

Hence, (\ref{EBDSVT9}) and the Proposition above leads to

\begin{proposition}
Under the assumptions and notations above, the magnitud $T$ satisfies
the balance equations in the moving domains, if and only if the
following equivalent conditions are satisfied: 
\begin{equation}
\frac{\partial T}{\partial t}(y,t) +div_y(T(y,t) \ .\ \vec{V}(y)) =
f(y,t)  -div_y(\vec{J}), \quad  y\in \Omega(t) , \ t>0
\label{EBDSVT11}
\end{equation}
or 
\begin{equation}
\frac{\partial}{\partial t} \overline{T}(x,t) +\overline{T}(x,t) \ \
\overline{div (\vec{V})}(x,t) =\overline{f}(x,t)
-\overline{div_y(\vec{J})}(x,t) \quad x\in \Omega_0, \ t>0 . 
\label{EBDSVT12}
\end{equation} 

\end{proposition}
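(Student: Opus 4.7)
The plan is to combine the balance statement (\ref{EBDSVT9}) with the two equivalent forms of the Reynolds transport theorem given in the previous proposition, and then invoke the arbitrariness of the subdomain $W_{0}\subset \Omega_{0}$ to pass from the integral identity to a pointwise one. The statement then splits naturally into two parallel arguments, one producing (\ref{EBDSVT11}) in the moving frame and one producing (\ref{EBDSVT12}) in the reference frame $\Omega_{0}$.

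First I would derive (\ref{EBDSVT11}). Plugging the form (\ref{EBDSV4}) of the Reynolds transport theorem into the left-hand side of (\ref{EBDSVT9}), I obtain
\begin{displaymath}
\int_{W(t)}\Big(\frac{\partial T}{\partial t}(y,t)+div_{y}(T(y,t)\vec{V}(y))\Big)\,dy=\int_{W(t)}\big(f(y,t)-div_{y}\vec{J}\big)\,dy.
\end{displaymath}
Since $W(t)=\phi(t)W_{0}$ and $W_{0}\subset \Omega_{0}$ is an arbitrary smooth subdomain, the image $W(t)$ is also arbitrary among smooth subdomains of $\Omega(t)$; hence the integrands must coincide almost everywhere, and under the standing smoothness assumptions pointwise, yielding (\ref{EBDSVT11}).

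For (\ref{EBDSVT12}) I would use the form (\ref{EBDSV3}) on the left-hand side of (\ref{EBDSVT9}), and on the right-hand side apply the change of variables $y=\phi(t)x$, whose Jacobian is exactly $|K(x,t)|$, to rewrite both source and flux terms as integrals over $W_{0}$. This produces
\begin{displaymath}
\int_{W_{0}}\Big(\frac{\partial \overline{T}}{\partial t}+\overline{T}\,\overline{div\vec{V}}-\overline{f}+\overline{div_{y}\vec{J}}\Big)(x,t)\,|K(x,t)|\,dx=0.
\end{displaymath}
By arbitrariness of $W_{0}$ and the strict positivity of $|K(x,t)|$ given by (\ref{ACDC}), the bracket must vanish pointwise, giving (\ref{EBDSVT12}).

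Finally, I would verify directly the equivalence of (\ref{EBDSVT11}) and (\ref{EBDSVT12}) by the chain rule, as a consistency check: differentiating $\overline{T}(x,t)=T(\phi(t)x,t)$ in $t$ produces $\partial_{t}\overline{T}=(\partial_{t}T)\circ\phi+\nabla_{y}T\cdot\vec{V}$ evaluated at $(\phi(t)x,t)$, while expanding $div_{y}(T\vec{V})=\nabla_{y}T\cdot\vec{V}+T\,div\vec{V}$ and evaluating both sides of (\ref{EBDSVT11}) at $y=\phi(t)x$ reproduces (\ref{EBDSVT12}) termwise. The only mild obstacle is bookkeeping: one must be careful that $\overline{div_{y}\vec{J}}$ denotes the divergence in the $y$ variable computed and then pulled back, rather than the divergence of the pulled-back field, but no step demands more than the standard change of variables and the arbitrariness argument.
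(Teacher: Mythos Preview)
Your proposal is correct and follows essentially the same approach as the paper: you equate (\ref{EBDSV4}) with (\ref{EBDSVT9}) and use arbitrariness of $W(t)$ to get (\ref{EBDSVT11}), then change variables in (\ref{EBDSVT9}) and equate with (\ref{EBDSV3}) over $W_{0}$ to get (\ref{EBDSVT12}). Your explicit invocation of the positivity of $|K(x,t)|$ from (\ref{ACDC}) and the chain-rule consistency check are welcome additions, but the core argument is the same.
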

\begin{proof} 
First, equating  (\ref{EBDSV4})  and  (\ref{EBDSVT9}) we get  
\begin{displaymath}
\displaystyle\int_{W(t)}^{} \left(\displaystyle\frac{\partial
    T}{\partial t}(y,t)+ div_y(T(y,t) \vec{V}(y)) \right) \, dy =
\displaystyle\int_{W(t)}^{}(f(y,t) - div_y\vec{J}) \,dy .  
\end{displaymath} 
Since  $W(t)=\phi(t)(W_0)$,   $\phi(t)$ is a diffeormorphism and
$W_{0}$ is arbitrary, we get (\ref{EBDSVT11}). 

Now,   using $y = \phi(t)x$ we get in the right
hand side  of (\ref{EBDSVT9}) 
\begin{displaymath} \label{EBDSVT10}
 \displaystyle\int_{W_0}^{}
\overline{f}(x,t) \left | K(x,t) \right |  \, dx -
\displaystyle\int_{W_0}^{} \overline{div_y(\vec{J})}(x,t) \left |
  K(x,t) \right | \, dx  , 
\end{displaymath}
equating to  (\ref{EBDSV3})  and using that $W_{0}$ is arbitrary, we
get (\ref{EBDSVT12}). 
\end{proof}

\section{Boundary and initial conditions}

As we consider Dirichlet boundary conditions and using 
\begin{displaymath}
y=\phi(t)x, \qquad \Omega(t) = \phi(t) \Omega_0,   \qquad \partial
\Omega(t)  = \phi(t)(\partial \Omega_0)
\end{displaymath}
then 
\begin{displaymath}
T(y,t)=0 \ \ \ \forall \ y\in \partial \Omega(t) \ \ \Leftrightarrow \ \ \overline{T}(x,t)=0 \ \ \ \forall \ x\in \partial \Omega_0
\label{Marci}
\end{displaymath}

As for the initial condition we have, since $\phi(0)=I$,   
\begin{displaymath}
T(y,0)= T_0(y) \ \ \forall \ y\in \Omega_0   \ \
\Leftrightarrow \ \ \overline{T}(x,0)= T_0(x) \ \ \forall  \ x\in
\Omega_0 . 
\label{Marci1}
\end{displaymath}

Thus,  (\ref{EBDSVT11}) and  (\ref{EBDSVT12}), with boundary and
initial conditions read, respectively, 
\begin{equation}
\left\{
\begin{array}{cl}
\frac{\partial T}{\partial t}(y,t) +div_y(T(y,t) \ .\ \vec{V}(y)) =
f(y,t)  -div_y(\vec{J}) \ \ \ \ y\in \Omega(t) &\mbox{ } \\ 
&\mbox{ } \\
\  \ \ \ T(y,t)= 0    \ \ y \in \partial \Omega(t)\ \ \  \forall t \ \
\ \ \ \ \ \ \ \  T(y,0)= T_0(y)  \ \ \ \ \  y \in \Omega_0     &\mbox{
}   
\end{array}\right.
\label{EBCDC1}
\end{equation}
\begin{equation}
\left\{
\begin{array}{cl}
\frac{\partial}{\partial t} \overline{T}(x,t) +\overline{T}(x,t) \ \
\overline{div (\vec{V})}(x,t) =\overline{f}(x,t)
-\overline{div_y(\vec{J})}(x,t) \ \ \ x\in \Omega_0 &\mbox{ } \\ 
&\mbox{ } \\
\  \ \ \ \overline{T}(x,t)= 0    \ \ x \in \partial \Omega_0\ \ \
\forall t \ \  \ \ \ \ \ \ \ \  \overline{T}(x,0)= T_0(x)  \ \ \ \ \
x \in \Omega_0.     &\mbox{ }  
\end{array}\right.
\label{EBCDC2}
\end{equation}

In fact we use  (\ref{EBCDC2}) to define a solution of
(\ref{EBCDC1}), i.e.  
\begin{displaymath}
T(y,t)\ \  \mbox{satisfies}  \ \  (\ref{EBCDC1}) \ \ \Leftrightarrow \overline{T}(x,t) \ \ \mbox{satisfies} \ \ (\ref{EBCDC2}). 
\end{displaymath}

\section{Balance equations without diffusion}

\subsection{ No flux and no diffusion: pure inertia}

With the previous notations, assume $div_y(\vec{J})=0$ then the
following problems are equivalent 
\begin{equation}
\left\{
\begin{array}{cl}
\frac{\partial }{\partial t} T(y,t) +div_y(T(y,t) \ .\ \vec{V}(y)) = f(y,t) \ \ \ \ y\in \Omega(t)    &\mbox{ } \\
&\mbox{ } \\
T(y,t)=0  \ y\in\partial \Omega(t) \  \  \forall t \ \  \ \ \ \ \ \ \ \ \ \ \  T(y,0)= T_0(y)  \ \ \ \ \ \ \  y \in \Omega_0     &\mbox{ }  
\end{array}\right.
\label{EBDSVTSD1}
\end{equation}
and 
\begin{equation}
 \left\{
\begin{array}{cl}
\frac{\partial}{\partial t} \overline{T}(x,t) +\overline{T}(x,t) \ \ \overline{div (\vec{V})}(x,t) =\overline{f}(x,t) \ \ \ x\in \Omega_0    &\mbox{ } \\
&\mbox{ } \\
 \overline{T}(x,t)= 0  \ \ \ x\in\partial\Omega_0 \ \ \ \ \forall t \
 \  \ \ \ \ \ \ \ \ \ \ \  \overline{T}(x,0)= T_0(x)  \ \ \ \   x\in
 \Omega_0.    &\mbox{ }  
\end{array}\right.
\label{EBDSVTSD2}
\end{equation}
Then we have 

\begin{proposition}
With the notations above,  (\ref{EBDSVTSD1}) and  (\ref{EBDSVTSD2})
have a unique explicit solution given by 
\begin{displaymath}
T(y,t)=T_0(x) \  e^{-\int_{0}^{t} div_y \vec{V}(\phi(r)x) \, dr}  \
+ \ \displaystyle\int_{0}^{t}  \ e^{-\int_{s}^{t} div_y
  \vec{V}(\phi(r)x)  \, dr} \ f(y,s)  \, ds , \quad  y=\phi(t)x \in \Omega(t) 
\end{displaymath} 
and 
\begin{displaymath}
\overline{T}(x,t)=T_0(x) \  e^{-\int_{0}^{t} div \vec{V}(\phi(r)x) \,
  dr}  \ +   \displaystyle\int_{0}^{t}  \ e^{-\int_{s}^{t}
  div\vec{V}(\phi(r)x) \, dr} \ \overline{f}(x,s)  \, ds  , \quad x
\in \Omega_{0}, 
\end{displaymath}
respectively. 

\end{proposition}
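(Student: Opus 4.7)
The plan is to observe that, after passing to the equivalent formulation (\ref{EBDSVTSD2}) in the fixed initial domain $\Omega_0$, the equation has no spatial derivatives on $\overline{T}$. For each fixed $x\in\Omega_0$ it is therefore a scalar linear ODE in $t$ for $\overline{T}(x,\cdot)$:
\begin{displaymath}
\frac{d}{dt}\overline{T}(x,t)+a(x,t)\,\overline{T}(x,t)=\overline{f}(x,t),\qquad \overline{T}(x,0)=T_0(x),
\end{displaymath}
where $a(x,t)=\overline{\mathrm{div}\,\vec{V}}(x,t)=\mathrm{div}\,\vec{V}(\phi(t)x)$. This reduces the PDE problem to a one-parameter family of ODE initial value problems, which is the conceptual heart of the statement.

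I would then solve each of these ODEs by the classical integrating factor method. Multiplying by $\exp\bigl(\int_0^t a(x,r)\,dr\bigr)$ and integrating from $0$ to $t$ yields
\begin{displaymath}
\overline{T}(x,t)=T_0(x)\,e^{-\int_0^t \mathrm{div}\,\vec{V}(\phi(r)x)\,dr}+\int_0^t e^{-\int_s^t \mathrm{div}\,\vec{V}(\phi(r)x)\,dr}\,\overline{f}(x,s)\,ds,
\end{displaymath}
which is exactly the claimed formula. Uniqueness follows from standard ODE theory: the difference of two solutions satisfies the same linear equation with zero source and zero initial data, and Gronwall's inequality (or direct integration of the homogeneous equation) forces it to vanish identically.

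To recover the expression for $T(y,t)$ in the moving domain, I would invoke the equivalence between (\ref{EBDSVTSD1}) and (\ref{EBDSVTSD2}) already established in the previous section, namely $T(y,t)=\overline{T}(\phi(-t)y,t)$. Writing $y=\phi(t)x$ and substituting into the formula for $\overline{T}$ produces the stated expression for $T$, where $x$ is understood as the Lagrangian label of the point $y$ at time $t$.

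The only point requiring a small comment is the Dirichlet boundary condition. Since the reduced equation is a pointwise ODE, the boundary condition $\overline{T}(x,t)=0$ for $x\in\partial\Omega_0$ is not dynamically enforced but must be compatible with the data; from the explicit formula it holds automatically provided $T_0(x)=0$ and $\overline{f}(x,t)=0$ for $x\in\partial\Omega_0$, which is exactly the compatibility built into the moving-domain Dirichlet data $T(y,t)=0$ on $\partial\Omega(t)$. There is no real obstacle here, only bookkeeping; the substantive content of the proposition is the reduction to an ODE in $t$ parametrised by $x$, made possible precisely because the change of variables $y=\phi(t)x$ absorbs the transport term $\mathrm{div}_y(T\vec{V})$ into the pointwise factor $\overline{T}\,\overline{\mathrm{div}\,\vec{V}}$.
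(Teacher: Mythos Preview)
Your proposal is correct and follows essentially the same approach as the paper: the paper's proof simply states that for each fixed $x\in\Omega_0$ equation (\ref{EBDSVTSD2}) is a linear nonhomogeneous ODE $Z'(t)+P(t)Z(t)=h(t)$, $Z(0)=Z_0$, and that the formula for (\ref{EBDSVTSD1}) follows immediately. Your write-up is considerably more detailed (integrating factor, uniqueness, the compatibility remark on the boundary data), but the underlying idea is identical.
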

\begin{proof}
  The solution of (\ref{EBDSVTSD2}) is obtained by solving a linear
  nonhomogeneous ODE
  \begin{displaymath}
Z'(t)+ P(t) Z(t)=h(t) , \quad   Z(0)= Z_0    
  \end{displaymath}
for each $x\in \Omega_{0}$. From this the solution of
(\ref{EBDSVTSD1}) is immediate. 
\end{proof}

\begin{remark}
Assume in particular that there are no source terms, that is,
$f=0$. Hence in  (\ref{EBDSVTSD1}) we have 
\begin{displaymath}
T(y,t)=T_0(x) \  e^{-\int_{0}^{t} div_y \vec{V}(\phi(r)x) \, dr}   ,
\quad  y=\phi(t)x 
\end{displaymath}

Thus, if moreover $div(\vec{V})=0$ then 
\begin{displaymath}
T(y,t)=T_0(x) \quad  y=\phi(t)x , 
\end{displaymath}
and   $T$ remains constant along the paths of the flow. 

On the other hand if the flow is expansive then  $T(y,t)$ decreases
along the paths of the flow, while it increases if the flow is
contractive. 

\end{remark}

\subsection{Flux and no diffusion: transport equations}

Below we use $\psi(t)= \phi^{-1}(t) = \phi(-t)$. 

\begin{proposition} 
If we assume  
\begin{displaymath}
\vec{J}(y,t)= \vec{a}(y,t) T(y,t) \ \ \ \ y\in \ \Omega(t) 
\end{displaymath}
with a $C^{1}$ scalar field 
\begin{displaymath}
\vec{a}: \R^{n}\times \R \ \ \rightarrow \ \ \R
\end{displaymath}
then the balance equations (\ref{EBCDC1}) and (\ref{EBCDC2}) read 
\begin{equation}
\left\{
\begin{array}{cl}
\frac{\partial}{\partial t} T(y,t) +div_y(T(y,t) \ .\ \vec{V}(y)) +
\nabla_y T(y,t) . \vec{a}(y,t)+ div_y(\vec{a}(y,t)) T(y,t)= f(y,t)\  \
\ y \ \in \Omega(t)  &\mbox{ } \\ 
&\mbox{ } \\
\ T(y,t)= 0  \ \ \ y\in \partial\Omega(t) \ \ \forall t \ \  \ \ \ \ \
\ \ \ \ \ \  T(y,0)= T_0(y)  \ \ \ \ \ \ \  y \in \Omega_0     &\mbox{
}   
\end{array}\right.
\label{EBDSVTCD1}
\end{equation}
 and 
\begin{equation}
\left\{
\begin{array}{cl}
\frac{\partial \overline{T}}{\partial t} (x,t)+ \overline{T}(x,t)
C(x,t)  + \nabla_x \overline{T}(x,t) \vec{b}(x,t)  =\overline{f}(x,t)
\ \ \ x\in \Omega_0 &\mbox{ } \\ 
&\mbox{ } \\
 \overline{T}(x,t)= 0 \   x\in \partial\Omega_0 \ \  \forall t  \ \ \
 \ \ \ \ \  \overline{T}(x,0)= T_0(x)  \ \ \ \   x\in \Omega_0
 &\mbox{ }   
\end{array}\right.
\label{EBDSVTCD2}
\end{equation}
which are equivalent, where  
\begin{displaymath}
C(x,t)= \overline{div_y(\vec{V})}(x,t)+
\overline{div_y(\vec{a})}(x,t), \quad 
\vec{b}(x,t)=   \overline{ D\psi(t) y  \cdot \vec{a}(y,t)}  .  
\end{displaymath}
\end{proposition}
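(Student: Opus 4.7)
The plan is to derive both formulations in parallel, starting from the balance equations (\ref{EBCDC1}) and (\ref{EBCDC2}) and substituting $\vec{J}(y,t)=\vec{a}(y,t)T(y,t)$. For (\ref{EBDSVTCD1}), this is immediate: the product rule for the divergence gives
\begin{displaymath}
  div_y(\vec{a}(y,t)T(y,t)) = \nabla_y T(y,t)\cdot \vec{a}(y,t) + div_y(\vec{a}(y,t))\,T(y,t),
\end{displaymath}
and transferring these terms to the left-hand side of (\ref{EBCDC1}) yields (\ref{EBDSVTCD1}) directly. The boundary and initial conditions require no modification.

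For (\ref{EBDSVTCD2}) I would start from (\ref{EBCDC2}) and rewrite $\overline{div_y(\vec{J})}(x,t) = \overline{div_y(\vec{a}T)}(x,t)$ in terms of $\overline{T}$. Applying the product rule first and then the bar operation termwise gives
\begin{displaymath}
  \overline{div_y(\vec{a}T)}(x,t) = \overline{\nabla_y T\cdot \vec{a}}(x,t) + \overline{div_y(\vec{a})}(x,t)\,\overline{T}(x,t).
\end{displaymath}
The second summand combines with $\overline{div(\vec{V})}(x,t)\,\overline{T}(x,t)$ on the left-hand side to produce exactly the coefficient $C(x,t)\overline{T}(x,t)$ that appears in the statement.

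The core of the computation is the first summand, $\overline{\nabla_y T\cdot \vec{a}}(x,t)$. Since $\overline{T}(x,t)=T(\phi(t)x,t)$, differentiating in $x$ by the chain rule yields $\nabla_x \overline{T}(x,t)=(D\phi(t)x)^{*}\,\nabla_y T(\phi(t)x,t)$. Because $\phi(t)$ is a diffeomorphism with inverse $\psi(t)$, differentiating $\phi(t)\circ \psi(t)=I$ at $y=\phi(t)x$ gives $D\psi(t)(y) = (D\phi(t)x)^{-1}$, whence inverting the transposed Jacobian
\begin{displaymath}
  \nabla_y T(\phi(t)x,t) = (D\psi(t)y)^{*}\,\nabla_x \overline{T}(x,t).
\end{displaymath}
Taking the Euclidean inner product with $\vec{a}(y,t)$ and pushing the transpose across the pairing $\langle A^{*}u,v\rangle=\langle u,Av\rangle$, one obtains $\overline{\nabla_y T\cdot \vec{a}}(x,t) = \nabla_x \overline{T}(x,t)\cdot \overline{D\psi(t)y\cdot\vec{a}(y,t)}(x,t) = \nabla_x \overline{T}(x,t)\cdot\vec{b}(x,t)$, which is precisely the drift term in (\ref{EBDSVTCD2}).

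Equivalence of the two systems follows at once from the identification in Section 4 between $T$ and $\overline{T}$ together with the bijective transfer of boundary and initial conditions. The only delicate point is the transpose/inverse manipulation that produces the explicit form of $\vec{b}$; everything else is a bookkeeping exercise with the product and chain rules, already prepared by the formulation of balance equations in the previous section.
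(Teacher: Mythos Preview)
Your proposal is correct and follows essentially the same route as the paper: product rule for $div_y(\vec a\,T)$ to obtain (\ref{EBDSVTCD1}), then the chain rule to rewrite $\overline{\nabla_y T\cdot\vec a}$ as $\nabla_x\overline T\cdot\vec b$ for (\ref{EBDSVTCD2}). The only cosmetic difference is that the paper differentiates $T(y,t)=\overline T(\psi(t)y,t)$ in $y$ directly to obtain $\nabla_y T=\nabla_x\overline T\,D\psi(t)y$ (row-vector convention), whereas you differentiate $\overline T(x,t)=T(\phi(t)x,t)$ in $x$ and then invert the Jacobian; both yield the same identity.
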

\begin{proof}
Note that (\ref{EBDSVTCD1}) follows by direct computation from
(\ref{EBCDC1}) using 
\begin{displaymath}
div_y(\vec{a}(y,t) \ T(y,t))= \nabla_y T(y,t) . \vec{a}(y)+ T(y,t)
div_y(\vec{a}(y,t)) . 
\end{displaymath}

On the other hand, for (\ref{EBDSVTCD2}) we have to write
$div_y(a(y,t)T(y,t))$ in terms of $x$. For this we observe that since
$x=\psi(t)y$ we have $T(y,t)= \overline{T}(\psi(t)y,t)$ 
and then 
\begin{equation} \label{amankaya} 
\frac{\partial T}{\partial y_i}(y,t)=\displaystyle\sum_{j=1}^n
\frac{\partial \overline{T}}{\partial x_j} (x,t) \frac{\partial
  \psi_j(t) y}{\partial y_i}  
\end{equation}
and $\nabla_y T(y,t)= \nabla_x \overline{T}(x,t) D\psi(t)y$. 

Thus, $\nabla_y T(y,t) . \vec{a}(y,t)=\nabla_x \overline{T}(x,t)
\left(\displaystyle D\psi(t)y \  . \  \vec{a}(y,t) \right)$  and hence 
\begin{displaymath}
\overline{\nabla_y T(y,t). \vec{a}(y,t)} = \nabla_x \overline{T}(x,t)
\overline{ \left(\displaystyle D\psi(t)y   \cdot
    \vec{a}(y,t)\right)} = \nabla_x \overline{T}(x,t) \vec{b}(x,t) . 
\end{displaymath}
\end{proof}

Now we show that under some natural geometrical conditions
(\ref{EBDSVTCD2}) (and hence (\ref{EBDSVTCD1})) can be solved by using
characteristics. Note that we now disregard boundary conditions.

\begin{proposition}
Assume that for all time and  $y \in \partial\Omega(t)$, we have  
\begin{displaymath}
<\vec{a}(y,t) , \vec{n_0}(y) > \leq 0 
\end{displaymath}
where  $<\cdot,\cdot>$ is the scalar product and and  $\vec{n_0}(y)$
is the unit outward normal vector at  $y$. 

Then  (\ref{EBDSVTCD1}) and  (\ref{EBDSVTCD2}) have a unique solution.

\end{proposition}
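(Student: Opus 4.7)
The plan is to solve (\ref{EBDSVTCD2}) by the method of characteristics on the fixed domain $\Omega_0$ and then recover a solution of (\ref{EBDSVTCD1}) via $T(y,t)=\overline T(\psi(t)y,t)$. For $x_0\in\overline{\Omega_0}$ I would introduce the characteristic curve $\xi(s;x_0)$ determined by
$$\dot\xi(s)=\vec b(\xi(s),s),\qquad \xi(0;x_0)=x_0,$$
which is well defined and smooth since $\vec b$ is $C^1$. Along $\xi$ the function $Z(s)=\overline T(\xi(s),s)$ satisfies the linear scalar ODE
$$Z'(s)+C(\xi(s),s)\,Z(s)=\overline f(\xi(s),s),$$
so that, once an initial value is prescribed, variation of constants yields $Z$ and hence $\overline T$ explicitly along each characteristic.

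The geometric heart of the proof is to translate the inflow hypothesis on $\vec a$ into the same kind of condition for $\vec b$ at $\partial\Omega_0$. Using $\psi(t)=\phi(t)^{-1}$ and the chain rule one has $D\psi(t)y=(D\phi(t)x)^{-1}$ at $y=\phi(t)x$, so the definition of $\vec b$ gives
$$\langle\vec b(x,t),\vec n_0(x)\rangle=\langle\vec a(y,t),((D\phi(t)x)^{*})^{-1}\vec n_0(x)\rangle.$$
By Corollary~\ref{Angie} the vector $((D\phi(t)x)^{*})^{-1}\vec n_0(x)$ is a positive multiple of the outward unit normal to $\partial\Omega(t)$ at $y$, so the hypothesis forces $\langle\vec b(x,t),\vec n_0(x)\rangle\le 0$ on $\partial\Omega_0$ for every $t$. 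A Nagumo-type invariance argument then ensures that forward characteristics starting in $\overline{\Omega_0}$ never leave $\overline{\Omega_0}$, and correspondingly that once a backward characteristic exits $\Omega_0$ it cannot return.

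To determine $\overline T(x,t)$ at an arbitrary $(x,t)\in\Omega_0\times(0,T]$, I would trace the characteristic through $(x,t)$ backward in time. Either (i) it stays in $\overline{\Omega_0}$ all the way to $s=0$, so the initial datum supplies $Z(0)=T_0(\xi(0))$, or (ii) it hits $\partial\Omega_0$ for the first time at some $s_1\in(0,t)$, and the Dirichlet boundary condition supplies $Z(s_1)=0$. The two cases are mutually exclusive by the no-re-entry property, and in either one the linear ODE for $Z$ gives a unique value for $\overline T(x,t)$; the solution of (\ref{EBDSVTCD1}) is then read off by $T(y,t)=\overline T(\psi(t)y,t)$, with uniqueness inherited from the uniqueness of characteristics and of the linear ODE. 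The main obstacle I anticipate is the tangential case of the hypothesis, where $\langle\vec a,\vec n_0\rangle$ vanishes and $\partial\Omega_0$ is only weakly invariant for the $\vec b$-flow; this requires either a careful direct Nagumo argument or an approximation of $\vec a$ by strict-inflow fields, the remaining linear-algebraic manipulations being routine consequences of Corollary~\ref{Angie}.
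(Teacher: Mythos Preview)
Your approach is essentially the paper's: solve (\ref{EBDSVTCD2}) by characteristics, reduce along each curve to the linear scalar ODE for $Z$, and use the identity $\langle\vec b(x,t),\vec n(x)\rangle=\langle\vec a(y,t),((D\phi(t)x)^{*})^{-1}\vec n(x)\rangle$ together with Corollary~\ref{Angie} to convert the inflow hypothesis on $\vec a$ into the inward condition for $\vec b$ on $\partial\Omega_0$. The linear algebra and the explicit variation-of-constants formula you obtain coincide with the paper's.

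The one genuine difference is in scope: the paper explicitly \emph{disregards the boundary condition} before stating the proposition and works only with forward characteristics issued from the initial data, arguing that once the inward condition holds the curves never leave $\Omega_0$ and formula~(\ref{pipis}) defines the solution along them. You instead trace characteristics backward from an arbitrary $(x,t)$ and distinguish case~(i) (reach $t=0$ inside $\overline{\Omega_0}$, use $T_0$) from case~(ii) (hit $\partial\Omega_0$ first, use the Dirichlet datum). Your treatment is therefore more complete as a solution of the full initial--boundary value problem, at the cost of needing the no-re-entry/Nagumo argument you flag; the paper's version is shorter but silently leaves undefined those points of $\Omega_0\times(0,T]$ whose backward characteristic exits $\Omega_0$. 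Your concern about the tangential case $\langle\vec a,\vec n_0\rangle=0$ is legitimate and is not addressed in the paper either; it simply asserts that the inward condition keeps the curve in $\overline{\Omega_0}$.
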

\begin{proof}
For  (\ref{EBDSVTCD2}) we use the method of characteristics. Hence,
for  $x_0 \in \Omega_0$ we define curves defined on some interval $I$
containing $0$
\begin{displaymath} 
s \longmapsto
X(s) \ \in \Omega_0, \quad X(0)= x_{0}, 
\qquad 
 s \longmapsto t(s) \ \in \R^{+}, \quad t(0)=0, 
\end{displaymath}
and  $s \longmapsto Z(s)= \overline{T}(X(s),t(s))$. Then 
\begin{displaymath} \label{caracteristicas2} 
\frac{d}{d s} Z(s)=  \nabla_x \overline{T}(X(s),t(s))
X^{'}(s) + \frac{\partial}{\partial t}
\overline{T}(X(s),t(s)) t^{'}(s).   
\end{displaymath}

So from   (\ref{EBDSVTCD2}) we choose 
\begin{displaymath} \label{EBDSVTCD7}
t'(s) = 1, \quad t(0)=0,
\end{displaymath} 
\begin{displaymath} \label{EBDSVTCD8}
X'(s) = \vec{b}(X(s),t(s)), \quad  X(0)= x_0
\end{displaymath}
which gives $t(s)=s$ and 
\begin{equation}
X'(t) = \vec{b}(X(t),t), \quad  X(0)= x_0 \in \Omega_{0}, 
\label{mama}
\end{equation}
which has a solution because  $\vec{b}\in C^{1}( \R^{n})$. 

Hence, from   (\ref{mama}) and (\ref{EBDSVTCD2})  
\begin{displaymath} \label{caracteristicas4}
\left\{
\begin{array}{cl}
\frac{d}{d t} Z(t) + C(X(t),t) Z(t)   =\overline{f}(X(t),t)  &\mbox{ } \\
 \ \ \ \ \ \ \ \ \ \ \  Z(0)=  T_0(x_0)       &\mbox{ }  
\end{array}\right.
\end{displaymath}
whose solution is given by 
\begin{equation}
Z(t)=T_0(x_0) e^{-\int_{0}^{t} C(X(r),r) \, dr} +
\displaystyle\int_{0}^{t} e^{-\int_{s}^{t} C(X(r),r) \, dr}
\overline{f}(X(s),s) \, ds  . 
\label{pipis}
\end{equation}

In the computation above we need the solution of (\ref{mama}) not to
leave $\Omega_0$. Thus, if  $X(t)$ reaches the boundary of  $\Omega_0$
at time  $t_0$ at the point $y_0=x(t_0) \in \partial\Omega_0$, the
tangent vector to the characteristic curve at this point is
$X^{'}(t_0)= \vec{b}(x_0,t_0)$, 
and therefore if it points inward, that is, if  
\begin{equation} \label{eres}
<\vec{b}(x_0,t_0), \vec{n} (x_0)> \ \leq 0
\end{equation} 
then it will remain in $\Omega$. Note now that from  (\ref{eres}) 
\begin{displaymath}
<\vec{b}(x_0,t_0), \vec{n} (x_0)> =  <D\psi(t_0)y_0. \vec{a}(y_0,t_0),
\vec{n}(x_0) > = < \vec{a}(y_0,t_0) , (D\psi(t_0)y_0)^{*}
\vec{n}(x_0)> =   
\end{displaymath}
\begin{displaymath}
=<\vec{a}(y_0,t_0) ,((D\phi(t_0)x_0)^{*})^{-1} \vec{n}(x_0) >= <\vec{a}(y_0,t_0) , N(y_0)> \leq 0 
\end{displaymath}
where we have used Corollary  \ref{Angie} and the assumption of this
Proposition. 

With this  (\ref{pipis}) gives the values of the solution in the
moving domain. 
\end{proof}

\section{ Balance equations with diffusion} 

Recalling the equivalent equations  (\ref{EBCDC1}) and
(\ref{EBCDC2}) we have 

\begin{proposition} 

Assume the flux vector field is given by  
\begin{displaymath}
\vec{J}(y,t)=-k\nabla_y T(y,t)  \ \ \ \ y\in \ \Omega(t) 
\end{displaymath}
for some   $k > 0$, then (\ref{EBCDC1}) and
(\ref{EBCDC2}) read 
\begin{equation}
\left\{
\begin{array}{cl}
\frac{\partial }{\partial t} T(y,t)  + \nabla_y T(y,t).\vec{V}(y)+ T(y,t) div(\vec{V})(y) - k \Delta T(y,t)= f(y,t) \ \ \ t \in\Omega(t) &\mbox{ } \\
&\mbox{ } \\
\ T(y,t)= 0  \ \ \ y\in \partial\Omega(t) \ \ \forall t \ \  \ \ \ \ \ \ \ \ \ \ \  T(y,0)= T_0(y)  \ \ \ \ \ \ \  y \in \Omega_0     &\mbox{ }  
\end{array}\right.
\label{EBDSVTCDD1}
\end{equation}
and 
\begin{equation}
\left\{
\begin{array}{cl}
\frac{\partial  \overline{T}(x,t)}{\partial t} +\overline{T}(x,t) \
\overline{div (\vec{V})}(x,t)- k \left(
  \displaystyle\displaystyle\sum_{k,i=1}^n   a_{k,i}(x,t) \frac{\partial^{2} \
    \overline{T}(x,t)}{\partial x_k \partial x_i}  \ + \
  \displaystyle\sum_{i=1}^n \frac{\partial \overline{T}(x,t)}{\partial
    x_i}  . s_i(x,t) \right) = \overline{f}(x,t) &\mbox{ } \\ 
&\mbox{ } \\
 \overline{T}(x,t)= 0 \   x\in \partial\Omega_0 \ \  \forall t  \ \ \
 \ \ \ \ \  \overline{T(x,0)}= T_0(x)  \ \ \ \   x\in \Omega_0 &\mbox{
 }   
\end{array}\right.
\label{EBDSVTCDD2}
\end{equation}   
where 
\begin{displaymath} 
 a_{k,i}(x,t) = \displaystyle\sum_{j=1}^n \frac{\partial \psi_k(t)y}{\partial y_j} \
. \ \frac{\partial \psi_i(t)y}{\partial y_j}= \nabla_y \psi_k
. \nabla_y \psi_i  \ \ , \ y=\phi(t)x   
\end{displaymath}
and 
\begin{displaymath}
s_i(x,t)=\displaystyle\sum_{j=1}^n \frac{\partial^{2}
  \psi_i(t)y}{\partial y_j^{2}}= \Delta_y \psi_i(t)y \ \ \  \
y=\phi(t)x . 
\end{displaymath}

\end{proposition}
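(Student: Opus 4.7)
The statement splits into two essentially independent computations: rewriting (\ref{EBCDC1}) as (\ref{EBDSVTCDD1}) in the moving variable $y$, and rewriting (\ref{EBCDC2}) as (\ref{EBDSVTCDD2}) in the reference variable $x$. Since (\ref{EBCDC1}) and (\ref{EBCDC2}) have already been shown to be equivalent in the previous section, it suffices to perform each substitution separately; the main work lies in the second one, which requires pushing the Laplacian through the change of variables $y=\phi(t)x$.

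For (\ref{EBDSVTCDD1}) the plan is immediate: plug $\vec{J}(y,t)=-k\nabla_y T(y,t)$ into (\ref{EBCDC1}), so that $-\mathrm{div}_y(\vec{J})=k\Delta_y T(y,t)$, and apply the product rule for the divergence of a scalar times a vector field,
\begin{displaymath}
\mathrm{div}_y\bigl(T(y,t)\,\vec{V}(y)\bigr)=\nabla_y T(y,t)\cdot\vec{V}(y)+T(y,t)\,\mathrm{div}(\vec{V})(y),
\end{displaymath}
to reach (\ref{EBDSVTCDD1}). The boundary and initial conditions are copied unchanged.

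For (\ref{EBDSVTCDD2}) the only nontrivial term is $\overline{\mathrm{div}_y(\vec{J})}(x,t)=-k\,\overline{\Delta_y T}(x,t)$, since the other terms of (\ref{EBCDC2}) are already in the desired form. Starting from $T(y,t)=\overline{T}(\psi(t)y,t)$, I would apply the chain rule exactly as in (\ref{amankaya}) to get $\partial T/\partial y_i=\sum_j (\partial \overline{T}/\partial x_j)\,(\partial \psi_j(t)y/\partial y_i)$, and then differentiate once more with respect to $y_i$. Each differentiation of $\partial \overline{T}/\partial x_j$ produces another chain-rule factor $\partial \psi_k(t)y/\partial y_i$, while differentiating $\partial \psi_j(t)y/\partial y_i$ gives $\partial^2\psi_j(t)y/\partial y_i^2$. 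Summing over $i$ then yields
\begin{displaymath}
\Delta_y T(y,t)=\sum_{k,i=1}^n\Bigl(\sum_{j=1}^n\frac{\partial\psi_k(t)y}{\partial y_j}\,\frac{\partial\psi_i(t)y}{\partial y_j}\Bigr)\frac{\partial^2\overline{T}}{\partial x_k\partial x_i}(x,t)+\sum_{i=1}^n\Bigl(\sum_{j=1}^n\frac{\partial^2\psi_i(t)y}{\partial y_j^2}\Bigr)\frac{\partial\overline{T}}{\partial x_i}(x,t),
\end{displaymath}
which is exactly $\overline{\Delta_y T}(x,t)=\sum_{k,i}a_{k,i}(x,t)\,\partial^2_{x_k x_i}\overline{T}+\sum_i s_i(x,t)\,\partial_{x_i}\overline{T}$ with the $a_{k,i}$ and $s_i$ given in the statement. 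Substituting this identity into (\ref{EBCDC2}) yields (\ref{EBDSVTCDD2}).

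The only real obstacle is bookkeeping in the second chain rule: one must carefully relabel summation indices so that the quadratic form in the second derivatives is symmetric and matches the stated expression $a_{k,i}=\nabla_y\psi_k\cdot\nabla_y\psi_i$, and check that the inner sum of pure second derivatives of $\psi_i$ with respect to each $y_j$ collapses to $\Delta_y\psi_i(t)y=s_i(x,t)$. Once the indices are arranged properly there is nothing else to verify: the boundary and initial conditions transfer verbatim since $\phi(t)$ fixes $\partial\Omega_0\mapsto\partial\Omega(t)$ and $\phi(0)=I$.
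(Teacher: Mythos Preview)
Your proposal is correct and follows essentially the same approach as the paper: both obtain (\ref{EBDSVTCDD1}) by the trivial substitution $\mathrm{div}_y\vec{J}=-k\Delta_y T$ together with the product rule for $\mathrm{div}_y(T\vec{V})$, and both obtain (\ref{EBDSVTCDD2}) by applying the chain rule (\ref{amankaya}) twice to express $\Delta_y T$ in the $x$-variables. The only difference is cosmetic---the paper writes the computation as $\mathrm{div}_y(\nabla_x\overline{T}\cdot D\psi(t)y)$ and expands component by component, whereas you differentiate $\partial T/\partial y_i$ directly and sum---and your remark about relabeling indices is exactly the bookkeeping the paper carries out explicitly.
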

\begin{proof}
Clearly  $div_y(\vec{J})=-k\Delta T(y,t)$ for $y\in\Omega(t)$ and we
get (\ref{EBDSVTCDD1}). Now for (\ref{EBDSVTCDD2}), we have from
(\ref{amankaya}), 
\begin{displaymath}
\nabla_y T(y,t) =\nabla_x \overline{T}(x,t).D\psi(t)y  . 
\end{displaymath}
Hence, 
\begin{displaymath}
div_y(-k\nabla_y T(y,t))=-k \ \ div_y(\nabla_x \overline{T}(x,t) D\psi(t)y) =
\end{displaymath}
\begin{displaymath}
-k \ div_y \left( \displaystyle\displaystyle\sum_{i=1}^n \frac{\partial \ \overline{T}(x,t)}{\partial x_i}\ . \  \frac{\partial  \psi_i(t)y}{\partial y_1} ,...,\displaystyle\sum_{i=1}^n \frac{\partial \ \overline{T}(x,t)}{\partial x_i} \ . \  \frac{\partial \psi_i(t)y}{\partial y_n} \right). 
\end{displaymath}
Now observe that 
\begin{equation} \label{xime1}
\frac{\partial}{\partial y_j}\left(
  \displaystyle\displaystyle\sum_{i=1}^n \frac{\partial \
    \overline{T}(x,t)}{\partial x_i} . \frac{\partial \
    \psi_i(t)y}{\partial y_j}  \right)  
\displaystyle\sum_{i=1}^n  \left( \displaystyle\frac{\partial}{\partial y_j} \left( \displaystyle\frac{\partial \ \overline{T}(x,t)}{\partial x_i} \right) . \frac{\partial \ \psi_i(t)y}{\partial y_j} +  \frac{\partial \ \overline{T}(x,t)}{\partial x_i} .  \frac{\partial^{2} \ \psi_i(t)y}{\partial y_j^{2}} \right)  
\end{equation}
and by  (\ref{amankaya}), we get 
\begin{displaymath}
\frac{\partial}{\partial y_j} \left( \displaystyle\frac{\partial \
    \overline{T}(x,t)}{\partial x_i} \right) =
\displaystyle\sum_{k=1}^n \frac{\partial^{2} \
  \overline{T}(x,t)}{\partial x_k \ \partial x_i} \frac{\partial \
  \psi_k(t) y}{\partial y_j}  
\end{displaymath}
and we get in  (\ref{xime1})
\begin{displaymath}
\displaystyle\sum_{i=1}^n\left( \displaystyle\left( \displaystyle \ \displaystyle\sum_{k=1}^n \frac{\partial^{2} \ \overline{T}(x,t)}{\partial x_k \partial x_i}\ .  \  \frac{\partial \psi_k(t)y }{\partial y_j} \right) \frac{\partial \psi_i(t)y }{\partial y_j}  + \frac{\partial \ \overline{T}(x,t)}{\partial x_i} \ . \   \frac{\partial^2  \psi_i(t)y}{\partial y_j^2} \right) .
\end{displaymath}
Therefore 
\begin{displaymath}
-div_y(-k\nabla_y T(y,t)) = 
-k \displaystyle\sum_{j=1}^n\displaystyle \displaystyle\sum_{i=1}^n
\displaystyle\left( \displaystyle \ \displaystyle\sum_{k=1}^n
  \frac{\partial^{2} \ \overline{T}(x,t)}{\partial x_k \partial x_i} \
  .\  \frac{\partial \psi_k(t)y}{\partial y_j}\right)\ . \
\frac{\partial \psi_i(t)y}{\partial y_j} \  -      
\end{displaymath}
\begin{displaymath}
k \displaystyle\sum_{j=1}^n\displaystyle \displaystyle\sum_{i=1}^n
\frac{\partial \ \overline{T}(x,t)}{\partial x_i} \frac{\partial^{2}
  \psi_i(t)y} {\partial y_j^{2}}
\end{displaymath}
which leads to 
\begin{displaymath}
-k \ div_y \left( \displaystyle\displaystyle\sum_{i=1}^n \frac{\partial \ \overline{T}(x,t)}{\partial x_i}\ . \  \frac{\partial  \psi_i(t)y}{\partial y_1} ,...,\displaystyle\sum_{i=1}^n \frac{\partial \ \overline{T}(x,t)}{\partial x_i} \ . \  \frac{\partial \psi_i(t)y}{\partial y_n} \right) = 
\end{displaymath} 
\begin{displaymath}
-k \displaystyle\sum_{k,i=1}^n \frac{\partial^{2} \ \overline{T}(x,t)}{\partial x_k \partial x_i} \ .\ \left( \displaystyle\displaystyle\sum_{j=1}^n \frac{\partial \psi_k(t)y}{\partial y_j} \ . \ \frac{\partial \psi_i(t)y}{\partial y_j} \right) \ - \ k \displaystyle\sum_{i=1}^n \frac{\partial \overline{T}(x,t)}{\partial  x_i} \ . \ \left( \displaystyle\displaystyle\sum_{j=1}^n \frac{\partial^{2} \psi_i(t)y}{\partial y_j^{2}}\right).    
\end{displaymath}
and we get the result. 
\end{proof}

Concerning the main part in (\ref{EBDSVTCDD2}) we have the following 
\begin{proposition}

With the notations above, the term 
\begin{displaymath}
\displaystyle\sum_{k,i=1}^n  a_{k,i}(x,t) \frac{\partial^{2} \
  \overline{T}}{\partial x_k \partial x_i} (x,t) 
\end{displaymath}
can be written in divergence form. 
\end{proposition}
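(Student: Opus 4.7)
The plan is to observe that the matrix $A(x,t)=(a_{k,i}(x,t))_{k,i}$ is symmetric by inspection, since
\[
a_{k,i}=\sum_{j=1}^n \frac{\partial \psi_k(t)y}{\partial y_j}\,\frac{\partial \psi_i(t)y}{\partial y_j}= \nabla_y\psi_k\cdot \nabla_y\psi_i = a_{i,k},
\]
and then to rewrite the second order operator as a divergence minus a first order correction, absorbing this correction into the existing first order term of (\ref{EBDSVTCDD2}).

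Concretely, the first step is to expand
\[
\sum_{k,i=1}^n \frac{\partial}{\partial x_k}\!\left(a_{k,i}(x,t)\,\frac{\partial \overline{T}}{\partial x_i}\right) = \sum_{k,i=1}^n a_{k,i}(x,t)\,\frac{\partial^{2}\overline{T}}{\partial x_k\partial x_i} + \sum_{i=1}^n\left(\sum_{k=1}^n \frac{\partial a_{k,i}}{\partial x_k}(x,t)\right)\frac{\partial \overline{T}}{\partial x_i}.
\]
Solving for the principal part gives
\[
\sum_{k,i=1}^n a_{k,i}(x,t)\,\frac{\partial^{2}\overline{T}}{\partial x_k\partial x_i}
 = \sum_{k,i=1}^n \frac{\partial}{\partial x_k}\!\left(a_{k,i}(x,t)\,\frac{\partial \overline{T}}{\partial x_i}\right) - \sum_{i=1}^n \tilde s_i(x,t)\,\frac{\partial \overline{T}}{\partial x_i},
\]
where $\tilde s_i(x,t):=\sum_{k=1}^n \partial a_{k,i}/\partial x_k$. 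This is exactly the divergence form presentation being claimed, and substituting it into (\ref{EBDSVTCDD2}) just replaces the coefficient $s_i(x,t)$ by the modified coefficient $s_i(x,t)+\tilde s_i(x,t)$ in the first order part.

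The only technical point worth checking is that the coefficients $a_{k,i}(x,t)$ are regular enough in $x$ for $\partial a_{k,i}/\partial x_k$ to make sense pointwise; this is guaranteed by the smoothness of the flow $\phi(t)$ and its inverse $\psi(t)$ assumed throughout the paper, so $\tilde s_i$ is a continuous function on $\Omega_0\times[-T,T]$. No deeper obstacle arises: the whole statement is an elementary consequence of symmetry of $A$ and the product rule, and writing out the identity above finishes the proof.
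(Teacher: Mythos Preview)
Your proof is correct and follows essentially the same route as the paper: apply the product rule to $\partial_{x_k}\!\big(a_{k,i}\,\partial_{x_i}\overline{T}\big)$ and isolate the principal part as a divergence minus a first-order correction, which the paper denotes $c_i(x,t)=\sum_k \partial a_{k,i}/\partial x_k$ (your $\tilde s_i$). One small slip in your closing remark: substituting back into (\ref{EBDSVTCDD2}) changes the first-order coefficient to $s_i-\tilde s_i$, not $s_i+\tilde s_i$, in agreement with the paper's $d_i=s_i-c_i$.
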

\begin{proof}
Just note that 
\begin{displaymath} \label{xime2}
\displaystyle\sum_{k,i=1}^n a_{k,i}(x,t) \frac{\partial^{2} \
  \overline{T}(x,t)}{\partial x_k \partial x_i} =
\displaystyle\sum_{i=1}^n \frac{\partial}{\partial x_i}
\left(\displaystyle\displaystyle\sum_{k=1}^n \frac{\partial
    \overline{T}(x,t)}{\partial x_i} . a_{k,i}(x,t) \right) -
\displaystyle\sum_{i=1}^n \frac{\partial \overline{T}(x,t)}{\partial
  x_i}\ . c_i(x,t) 
\end{displaymath} 
with 
\begin{displaymath}
c_i(x,t)= \displaystyle\sum_{k=1}^n \frac{a_{k,i}(x,t)}{\partial x_i}
\end{displaymath}
\end{proof}

\begin{remark}
Note that now  (\ref{EBDSVTCDD2}) can be written as 
\begin{displaymath} \label{70}
\left\{
\begin{array}{cl}
\frac{\partial \overline{T} (x,t)}{\partial t} +\overline{T}(x,t)  \overline{div (\vec{V})}(x,t) - k \left( div(B(x,t)) - \displaystyle\sum_{i=1}^n \frac{\partial \overline{T}(x,t)}{\partial x_i} d_i(x,t) \right) = \overline{f}(x,t) &\mbox{ } \\
&\mbox{ } \\
 \overline{T}(x,t)= 0 \  x\in \partial\Omega_0 \ \  \forall t  \ \ \ \ \ \ \ \  \overline{T}(x,0)= T_0(x)  \ \ \ \   x\in \Omega_0 &\mbox{ }  
\end{array}\right.
\end{displaymath}
with 
\begin{displaymath} \label{xime3}
\vec{B}=(B_i)_{i=1,..,n} = A(x,t) \nabla_x \overline{T}(x,t)  
\end{displaymath}
\begin{displaymath}
A(x,t)= (a_{k,i}(x,t)), \quad 
d_i(x,t)=s_i(x,t)-c_i(x,t) . 
\end{displaymath} 
\end{remark}

\section{Parabolic PDEs in moving domains}

Now we consider general parabolic equations in moving domains. That
means that the equations are not necessarily balance equations. Hence,
we consider 
\begin{equation}
\left\{
\begin{array}{cl}
\frac{\partial }{\partial t} T (y,t) - k \Delta_y T(y,t) + \displaystyle\sum_{i=1}^n
\frac{\partial T}{\partial y_i}(y,t) \ . \ g_i(y,t) +
c(y,t)T(y,t)=f(t,y) \ \ \ \ y\in \Omega(t) &\mbox{ } \\ 
\  \ \ \ T(y,t)= 0    \ \ y \in \partial \Omega(t)\ \ \  \forall t \ \  \ \ \ \ \ \ \ \  T(y,0)= T_0(y)  \ \ \ \ \  y \in \Omega_0     &\mbox{ }  
\end{array}\right.
\label{ESODV1}
\end{equation}
with $k>0$ and given smooth  $c(y,t)$ and $\vec{g}(y,t)= (g_i(y,t),
\ldots, g_n(y,t))$. Note that this equation contains
(\ref{EBDSVTCDD1}) as a particular case. 

Then we have the following result whose proof
follows from the computation in the sections above. 

\begin{proposition}
With the notations above (\ref{ESODV1}) is equivalent to 
\begin{equation}
\left\{
\begin{array}{c}
\displaystyle \frac{\partial \overline{T}}{\partial t}(x,t) - k div(B(x,t)) 
+ \nabla_x \overline{T}(x,t) . \big( \vec{h}(x,t)- \vec{d}(x,t)\big)  +
\overline{c}(x,t)  \overline{T}(x,t)=\overline{f}(x,t)  \ \ \ x\in
\Omega_0 \\ 
\  \ \ \ \overline{T}(x,t)= 0    \ \ x \in \partial \Omega_0\ \ \
\forall t \ \  \ \ \ \ \ \ \ \  \overline{T}(x,0)= T_0(x)  \ \ \ \ \
x \in \Omega_0       
\end{array}\right.
\label{ESODV2}
\end{equation}
with  
\begin{displaymath}
B(x,t)= A(x,t) \nabla_x \overline{T}(x,t), \quad A(x,t)=
(a_{k,i}(x,t)) , \quad 
 a_{k,i}(x,t) =  \displaystyle\sum_{j=1}^n \frac{\partial
   \psi_k(t)y}{\partial y_j} \ . \ \frac{\partial \psi_i(t)y}{\partial
   y_j} ,
\end{displaymath}
\begin{displaymath}
d_i(x,t)=s_i(x,t)-c_i(x,t), \quad s_i(x,t)= \Delta_y \psi_i(t)y, \quad
 c_i(x,t)= \displaystyle\sum_{k=1}^n \frac{a_{k,i}(x,t)}{\partial x_i}
\end{displaymath}
\begin{displaymath}
  \vec{h}(x,t)  = (\overline{\vec{g}}(x,t) \ .  \ \nabla_y \psi_1(t) y
  , \ldots, \overline{\vec{g}}(x,t) \ .  \ \nabla_y \psi_n(t) y ) ,
  \quad y=\phi(t)x . 
\end{displaymath}
\end{proposition}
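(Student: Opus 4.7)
The argument is a direct change of variables using the diffeomorphism $y=\phi(t)x$, transforming each term of (\ref{ESODV1}) into its $\overline{T}$--counterpart. Most of the required identities have already been established earlier in the paper: formula (\ref{amankaya}) gives $\nabla_y T$ in terms of $\nabla_x\overline{T}$ and $D\psi(t)y$, while the computation of $-k\Delta_y T$ in the proof of the balance-equations-with-diffusion proposition yields the matrix $A(x,t)=(a_{k,i})$ and the first-order coefficients $s_i$. So the core of the proof is to assemble these pieces and collect them into the form of (\ref{ESODV2}).

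I would proceed term by term. First, for the time derivative, I would differentiate $\overline{T}(x,t)=T(\phi(t)x,t)$ with respect to $t$ at fixed $x$; the chain rule with $\dot\phi(t)x=\vec{V}(\phi(t)x)$ relates $\partial_t T$ at fixed $y$ to $\partial_t\overline{T}$ plus a correction along $\vec{V}$. Second, for the drift $\nabla_y T\cdot\vec g$, I would substitute (\ref{amankaya}) and rearrange the double sum to obtain $\nabla_x\overline{T}\cdot\vec h$, with $\vec h$ as stated. Third, for the diffusion term, I would invoke the formula from the previous section to rewrite $-k\Delta_y T$ as a pure second-order piece $-k\sum_{k,i}a_{k,i}\,\partial^2_{x_k x_i}\overline{T}$ plus a first-order remainder $-k\sum_i s_i\,\partial_{x_i}\overline{T}$, and then use the ``divergence form'' proposition to turn the second-order piece into $-k\,\mathrm{div}(B)+k\sum_i c_i\,\partial_{x_i}\overline{T}$; the two first-order diffusion contributions then combine into $-k\,\nabla_x\overline{T}\cdot\vec d$ with $\vec d=\vec s-\vec c$. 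The zero-order term $c(y,t)T(y,t)$ trivially becomes $\overline c\,\overline T$, and the source is $\overline f$. Boundary and initial conditions transfer as in Section~4, because $\partial\Omega(t)=\phi(t)\partial\Omega_0$ and $\phi(0)=I$.

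The computation is reversible because $\phi(t)$ is a diffeomorphism for every $t$, which yields the equivalence in both directions. The main obstacle is purely bookkeeping: keeping careful track of indices in the nested sums coming from the second derivatives, of the signs produced by the time derivative and by passing the second-order part to divergence form, and of how the correction from $\vec V$ in the time-derivative step combines with the $\vec g$-contribution inside $\vec h$ (and possibly with $\vec d$). No new analytical ingredients beyond the chain rule and the identities already proved in the preceding propositions are required.
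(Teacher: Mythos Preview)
Your proposal is correct and matches the paper's approach exactly: the paper's own proof is the single sentence ``follows from the computation in the sections above,'' and your plan is precisely a fleshed-out version of that, invoking (\ref{amankaya}), the Laplacian computation leading to (\ref{EBDSVTCDD2}), and the divergence-form proposition in the right order. You even flag the one genuine bookkeeping subtlety the paper glosses over --- the $\vec V$-correction arising from the chain rule on $\partial_t\overline{T}$ and how it must be absorbed into the first-order coefficients --- so your sketch is, if anything, more careful than the paper's.
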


Now we are in a position to proof that (\ref{ESODV1}) is well posed.

\begin{proposition}

Under the assumptions above, if the initial data satisfies 
\begin{displaymath}
T_0 \in L^{2}(\Omega_0)
\end{displaymath}
then (\ref{ESODV2}) and (\ref{ESODV1}) have a unique solution. 

\end{proposition}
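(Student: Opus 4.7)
The plan is to exploit the equivalence with (\ref{ESODV2}): since (\ref{ESODV2}) lives on the fixed reference domain $\Omega_0$ with homogeneous Dirichlet data, I would prove existence and uniqueness there by standard linear parabolic theory and then transfer back to (\ref{ESODV1}).

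First I would verify that (\ref{ESODV2}) fits the abstract framework of a linear non-autonomous parabolic problem in divergence form on $\Omega_0$, with principal part $-k \, \mathrm{div}(A(x,t)\nabla \overline{T})$ and lower-order terms whose coefficients $\vec{h}-\vec{d}$ and $\overline{c}$ are continuous and bounded on $\Omega_0\times[0,T]$ for every $T>0$. The bounds on the coefficients follow from the smoothness of $\vec{V}$ and hence of $\phi(t)$ and $\psi(t)=\phi(-t)$, together with the bounds (\ref{ACDC}) on $|K(x,t)|$, which guarantee that $D\phi$ and $D\psi$, together with their first derivatives in $y$, remain bounded on $\Omega_0\times[-T,T]$.

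The decisive step, which I expect to be the main obstacle, is showing that the principal part is \emph{uniformly elliptic} on $\Omega_0\times[0,T]$. Writing $M(x,t)=D\psi(t)y$ with $y=\phi(t)x$, the entries $a_{k,i}(x,t)=\sum_j \partial_{y_j}\psi_k\,\partial_{y_j}\psi_i$ assemble to
\begin{displaymath}
A(x,t)=M(x,t)\,M(x,t)^{*},
\end{displaymath}
so for any $\xi\in\R^n$,
\begin{displaymath}
\xi^{*}A(x,t)\xi=|M(x,t)^{*}\xi|^{2}.
\end{displaymath}
By the group property $\psi(t)\circ\phi(t)=I$ and the chain rule, $M(x,t)=(D\phi(t)x)^{-1}$, which by the earlier Proposition is an isomorphism with $|\det M|=|K(x,t)|^{-1}$. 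Using (\ref{ACDC}) and the continuity of $D\phi$ on the compact set $\overline{\Omega_0}\times[0,T]$ (when $\Omega_0$ is bounded; otherwise one uses the assumed smoothness of $\vec V$ and local arguments), one obtains $\|D\phi(t)x\|\le C(T)$ uniformly, hence $\|M(x,t)^{-*}\|=\|(D\phi(t)x)^{*}\|\le C(T)$, giving
\begin{displaymath}
\xi^{*}A(x,t)\xi \ \geq\ \alpha(T)\,|\xi|^{2},\qquad \alpha(T):=C(T)^{-2}>0,
\end{displaymath}
for all $x\in\Omega_0$, $t\in[0,T]$ and $\xi\in\R^n$. A symmetric computation gives the matching upper bound, so $A(x,t)$ is uniformly elliptic with bounded coefficients.

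With uniform ellipticity in hand, I would introduce the time-dependent bilinear form
\begin{displaymath}
a(t;u,v)=\int_{\Omega_0}\!\!\Big(k\,A(\cdot,t)\nabla u\cdot\nabla v + \nabla u\cdot(\vec h-\vec d)(\cdot,t)\,v + \overline{c}(\cdot,t)\,u\,v\Big)\,dx
\end{displaymath}
on $H^{1}_{0}(\Omega_0)\times H^{1}_{0}(\Omega_0)$, check continuity and a G\aa rding inequality (both a consequence of the ellipticity and the $L^{\infty}$ bounds on the coefficients), and apply the classical Lions existence theorem (see e.g. Lions--Magenes or Dautray--Lions). This yields a unique $\overline{T}\in L^{2}(0,T;H^{1}_{0}(\Omega_0))\cap C([0,T];L^{2}(\Omega_0))$ with $\partial_t\overline{T}\in L^{2}(0,T;H^{-1}(\Omega_0))$ solving (\ref{ESODV2}) with initial datum $T_0\in L^{2}(\Omega_0)$. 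Setting $T(y,t)=\overline{T}(\psi(t)y,t)$ defines the corresponding solution of (\ref{ESODV1}), and uniqueness for (\ref{ESODV1}) is inherited from that of (\ref{ESODV2}) through the equivalence already established.
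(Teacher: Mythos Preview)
Your proposal is correct and follows essentially the same approach as the paper: both establish that $A(x,t)=D\psi(t)y\,(D\psi(t)y)^{*}$ is uniformly elliptic on $\Omega_0\times[0,T]$ via the nonsingularity of $D\psi(t)y$ and the Jacobian bounds (\ref{ACDC}), and then invoke standard linear parabolic theory on the fixed domain $\Omega_0$. The only cosmetic difference is that the paper cites Amann \cite{HA,HA1} for the existence step, whereas you spell out the variational route via a G\aa rding inequality and Lions' theorem.
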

\begin{proof}
Observe that in (\ref{ESODV2}) 
\begin{displaymath}
A(x,t)=D\psi(t)y .(D\psi(t)y)^{t} \ \ \ \ \ \  y = \phi(t)x . 
\end{displaymath}
Then we show below that this is a positive definite matrix. In fact
for  $\xi \in \R^{n}$,  $\xi \neq 0$, we have  
\begin{displaymath}
<A(x,t) \xi,\xi> = <(D\psi(t)y)^{t}\xi,(D\psi(t)y)^{t}\xi>=  \left \|
  (D\psi(t)y)^{t} \xi \right \|^{2} > 0 . 
\end{displaymath}
since   $(D\psi(t)y)^{t}$ is non singular. Also, from  (\ref{ACDC}),
the eigenvalues of  $D\phi(t)$ are bounded and bounded away from $0$
for all  $t \in [0,T]$ and so are the eigenvalues of
$D\psi(t)$. Therefore there exist  $\alpha = \alpha(T)> 0$ such that  
	$\left \| (D\psi(t)y)^{t} \xi \right \|^{2}\geq \alpha \left
          \| \xi \right \|^{2}$. 

Using this, the smoothness of the coefficients and the results in
\cite{HA, HA1}, we get that (\ref{ESODV2}) has a unique
smooth solution and so does (\ref{ESODV1}).
\end{proof}

\section{Maximum principle}

In this section we show that the parabolic equations in moving domains possess
the  maximum principle. We will show this on the particular example of
the heat equation 
\begin{equation}
\left\{
\begin{array}{cl}
\frac{\partial T }{\partial t}(y,t) - \Delta T(y,t) + a(y,t) \ T(y,t) = 0  \ \ \ \ \ y \in \Omega(t) &\mbox{ } \\
&\mbox{ } \\
\  \ \  \ \ T(y,t)= 0    \ \ y \in \partial \Omega(t)\ \ \  \forall t \ \  \ \ \ \ \ \ \ \ T(y,0)= T_0(y)  \ \ \ \ \  y \in \Omega_0     &\mbox{ } 
\end{array}\right.
\label{frank}
\end{equation}
witha a sufficiently smooth coefficient  $a(y,t)$. Then we have

\begin{proposition}

With the assumption above, if 
\begin{displaymath}
T_0\in L^{2}(\Omega_0), \quad T_0(x)\geq 0 \quad x\in \Omega_{0}
\end{displaymath}
and 
\begin{displaymath}
\alpha(t) \leq  a(y,t) \ \ \ \forall y\in \Omega(t) \ \ \ \forall t 
\end{displaymath}
for some smooth  $\alpha (t)$. Then  
\begin{displaymath}
T(y,t) \geq 0, \quad y \in \Omega(t), \quad t\geq 0 . 
\end{displaymath}

\end{proposition}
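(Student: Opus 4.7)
\bigskip
\noindent\textbf{Proof plan.} The natural approach is to pull back to the fixed domain $\Omega_0$ and run a weak-maximum-principle energy argument there. By the proposition preceding (\ref{ESODV2}), the transported function $\overline{T}(x,t)=T(\phi(t)x,t)$ satisfies an equation of the form
\begin{displaymath}
\frac{\partial\overline{T}}{\partial t} - \mathrm{div}_x\bigl(A(x,t)\nabla_x\overline{T}\bigr) - \nabla_x\overline{T}\cdot\vec{d}(x,t) + \overline{a}(x,t)\,\overline{T} = 0, \qquad x\in\Omega_0,
\end{displaymath}
with $\overline{T}=0$ on $\partial\Omega_0$ and $\overline{T}(x,0)=T_0(x)\ge 0$. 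Moreover the proposition used in the well-posedness proof shows that $A(x,t)=D\psi(t)y\,(D\psi(t)y)^{t}$ is uniformly positive definite: there is $\alpha=\alpha(T_{*})>0$ such that $\langle A(x,t)\xi,\xi\rangle\ge\alpha|\xi|^{2}$ on $\Omega_{0}\times[0,T_{*}]$ for every $\xi\in\R^{n}$.

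\smallskip
\noindent The plan is then to test the equation against the negative part $w:=(\overline{T})^{-}=\max(-\overline{T},0)$. First I would note that $w\in L^{2}(0,T_{*};H_{0}^{1}(\Omega_{0}))$ with $w(\cdot,0)\equiv 0$, and that on $\{\overline{T}<0\}$ one has $\overline{T}=-w$ and $\nabla_x\overline{T}=-\nabla_x w$. Multiplying the equation by $-w$ and integrating over $\Omega_0$ (using $w=0$ on $\partial\Omega_0$ for the divergence-form term) yields the identity
\begin{displaymath}
\tfrac{1}{2}\tfrac{d}{dt}\!\int_{\Omega_{0}}w^{2}\,dx + \int_{\Omega_{0}}\nabla w\cdot A\,\nabla w\,dx - \int_{\Omega_{0}} w\,\nabla w\cdot\vec{d}\,dx + \int_{\Omega_{0}}\overline{a}\,w^{2}\,dx = 0.
\end{displaymath}
The second term is bounded below by $\alpha\int|\nabla w|^{2}$; the fourth, by the hypothesis $\overline{a}(x,t)\ge\alpha(t)$, is bounded below by $\alpha(t)\int w^{2}$; and the third is controlled by Cauchy--Schwarz and Young's inequality as $|\int w\,\nabla w\cdot\vec{d}|\le\tfrac{\alpha}{2}\int|\nabla w|^{2}+C_{*}\int w^{2}$, where $C_{*}$ depends on $\|\vec{d}\|_{L^{\infty}(\Omega_{0}\times[0,T_{*}])}$, which is finite by the smoothness of $\phi$ and hence of $\psi$.

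\smallskip
\noindent Absorbing the $\tfrac{\alpha}{2}\int|\nabla w|^{2}$ on the left yields a Gronwall-type inequality
\begin{displaymath}
\tfrac{d}{dt}\!\int_{\Omega_{0}}w^{2}\,dx \ \le\ K(t)\!\int_{\Omega_{0}}w^{2}\,dx, \qquad K(t)=2C_{*}+2|\alpha(t)|,
\end{displaymath}
with $\int_{\Omega_{0}}w^{2}(x,0)\,dx=0$ because $T_{0}\ge 0$. Gronwall's lemma then gives $w\equiv 0$, i.e.\ $\overline{T}\ge 0$ in $\Omega_{0}\times[0,T_{*}]$, and since $T_{*}>0$ was arbitrary, one concludes $T(y,t)\ge 0$ for every $y\in\Omega(t)$ and $t\ge 0$.

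\smallskip
\noindent The main obstacle is not the algebra but the rigorous justification of using $w=(\overline{T})^{-}$ as a test function; this requires the standard Stampacchia chain rule for Sobolev functions, together with enough temporal regularity of $\overline{T}$ to give meaning to $\tfrac{d}{dt}\int w^{2}$, both of which are available from the smoothness asserted in the well-posedness proposition. Once that is in place, the sign control of the lower-order drift term $-\nabla_{x}\overline{T}\cdot\vec{d}$ (which has no a priori sign) is the only delicate point, and it is handled by the Young-inequality absorption above.
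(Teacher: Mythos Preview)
Your argument is correct, but the paper takes a somewhat different and cleaner route. Instead of pulling back to $\Omega_{0}$ and dealing with the transformed equation, the paper works \emph{directly in the moving domain}: it multiplies the equation by $T^{-}(y,t)$ and integrates over $\Omega(t)$, then invokes the Reynolds transport identity (\ref{EBDSV5}) applied to $(T^{-})^{2}$, together with $T^{-}=0$ on $\partial\Omega(t)$, to obtain
\[
\tfrac{1}{2}\tfrac{d}{dt}\int_{\Omega(t)}(T^{-})^{2}\,dy+\int_{\Omega(t)}|\nabla T^{-}|^{2}\,dy+\int_{\Omega(t)}a\,(T^{-})^{2}\,dy=0.
\]
Since the original operator in $\Omega(t)$ is just $-\Delta$, there is no first-order drift term to absorb: the gradient term is already nonnegative and can simply be dropped, after which $a\ge\alpha(t)$ and Gronwall finish the proof. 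Your pullback approach pays the price of introducing the lower-order term $\nabla_{x}\overline{T}\cdot\vec{d}$, which has no sign and forces the Young-inequality absorption and the appeal to $\|\vec d\|_{L^{\infty}}$ on compact time intervals. What you gain is that everything happens on a fixed domain with the standard parabolic test-function machinery; what the paper's approach gains is that the transport theorem absorbs the ``motion of the domain'' into the time derivative, leaving the simplest possible elliptic part and avoiding any coefficient bounds beyond the lower bound on $a$.
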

\begin{proof}
We multiply  (\ref{frank}) by the negative part of $T$,  $T^{-}(y,t)$, and integrate in
$\Omega(t)$,  to get 
\begin{displaymath}
\displaystyle\int_{\Omega(t)}^{} \frac{\partial T}{\partial t}(y,t) \
. \ T^{-}(y,t) \, dy - \displaystyle\int_{\Omega(t)}^{} \Delta T(y,t)\
. \ T^{-}(y,t) \, dy + \displaystyle\int_{\Omega(t)}^{} a(y,t) .\
T(y,t) . \ T^{-}(y,t) \, dy = 0 . 
\end{displaymath} 
Using (\ref{EBDSV5}) for $(T^{-})^{2}$ and the fact that 
$T^{-}(y,t)= 0$ in $\partial\Omega(t)$, because $T(y,t)=0$ in
$\partial\Omega(t)$, we have  
\begin{displaymath}
\frac{1}{2}\frac{d}{ d t} \displaystyle\int_{\Omega(t)}^{}
(T^{-})^{2}(y,t) \, dy + \displaystyle\int_{\Omega(t)}^{}  \left |
  \nabla T^{-}(y,t) \right |^{2} \, dy +
\displaystyle\int_{\Omega(t)}^{} a(y,t) (T^{-})^{2}(y,t) \, dy= 0 . 
\end{displaymath} 
Hence 
\begin{displaymath} \label{rena}
\frac{1}{2}\frac{d}{ d t} \left \| T^{-}(.,t) \right \|^{2}_{L^{2}(\Omega(t))}+ \alpha(t) \left \| T^{-}(.,t) \right \|^{2}_{L^{2}(\Omega(t))} \leq 0
\end{displaymath}
and taking $\bar{F}(t)=\left \| T^{-}(.,t) \right
\|^{2}_{L^{2}(\Omega(t))}$, we have 
\begin{displaymath}
\frac{d }{d t} \bar{F} (t) + 2 \alpha(t)  \bar{F}(t) \leq 0
\end{displaymath} 
and Gronwall's lemma leads to $\bar{F}(t) \leq \left \| T^{-}_0 \right \|_{L^2(\Omega_0)} e^{-2
  \int_{0}^{t} \alpha(s)\, ds} =0$, 
since $T_0^{-}= 0$ in $\Omega_0$. Therefore $T^{-}(y,t) = 0$ for $y\in
\Omega(t)$ and $t\geq 0$ as claimed. 
\end{proof}

\section{Energy estimates}

In this section we derive suitable energy estimates for the heat
equation in a moving domain 
\begin{equation}
\left\{
\begin{array}{cl}
\frac{\partial T }{\partial t}(y,t) - \Delta T(y,t) + a(y,t) \ T(y,t) = 0  \  \ \ y\in\Omega(t) &\mbox{ } \\
&\mbox{ } \\
\  \ \  \ \ T(y,t)= 0    \ \ y \in \partial \Omega(t)\ \ \  \forall t \ \  \ \ \ \ \ \ \ \ T(y,0)= T_0(y)  \ \ \ \ \  y \in \Omega_0     &\mbox{ } 
\end{array}\right.
\label{leo2}
\end{equation}
with a smooth enough  $a(y,t)$. First, we have for nonnegative
solutions

\begin{proposition}
Assume  
\begin{displaymath}
T_0 \in L^{2}(\Omega_0)  \quad  T_0(x)\geq 0, \quad x\in \Omega_{0}
\end{displaymath}
and 
\begin{displaymath}
\alpha(t) \leq  a(y,t) \ \ \ \forall y\in \Omega(t) \ \ \ \forall t .
\end{displaymath}
for some smooth  $\alpha (t)$ such that 
\begin{displaymath}
\liminf_{t\rightarrow \infty} \frac{1} {t}
\displaystyle\int_{0}^{t} \alpha(s)\, ds > \alpha_0 > 0 . 
\end{displaymath}
Then 
\begin{displaymath}
\displaystyle\int_{\Omega(t)}^{} T (y,t) \, dy  \leq  e^{-\int_{0}^{t}
  \alpha (t) \, ds} \displaystyle\int_{\Omega_0}^{} T_0(x) \, dx
\xrightarrow[{t \to \infty }]{} 0 . 
\end{displaymath}

\end{proposition}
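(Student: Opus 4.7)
The plan is to integrate the equation over $\Omega(t)$, convert the time derivative via Reynolds' transport formula (the boundary term vanishes thanks to the Dirichlet condition), control the Laplacian contribution by a sign argument at the boundary, and then close the estimate with Gronwall's inequality. The positivity of $T$, supplied by the maximum principle proved in the previous proposition, will be the crucial input that makes all the signs cooperate.

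Concretely, I would first invoke the preceding proposition to conclude $T(y,t)\geq 0$ for $y\in\Omega(t)$ and $t\geq 0$, since $T_{0}\geq 0$ and $a(y,t)\geq \alpha(t)$. Setting $F(t)=\int_{\Omega(t)} T(y,t)\,dy$, I would apply formula (\ref{EBDSV5}) to $T$ itself, noting that the surface integral $\int_{\partial\Omega(t)} T\,\vec V\cdot d\vec s$ vanishes because $T=0$ on $\partial\Omega(t)$. This gives
\begin{displaymath}
F'(t)=\int_{\Omega(t)}\frac{\partial T}{\partial t}(y,t)\,dy
      =\int_{\Omega(t)}\Delta T(y,t)\,dy-\int_{\Omega(t)}a(y,t)T(y,t)\,dy.
\end{displaymath}
The Laplacian term is $\int_{\partial\Omega(t)}\nabla T\cdot \vec n\,ds$ by the divergence theorem. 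Since $T\geq 0$ inside $\Omega(t)$ and $T=0$ on $\partial\Omega(t)$, the outward normal derivative satisfies $\nabla T\cdot \vec n\leq 0$ pointwise on $\partial\Omega(t)$ (approaching a boundary point from inside along $-\vec n$ gives a non-negative difference quotient). Hence $\int_{\Omega(t)}\Delta T\,dy\leq 0$. Using the hypothesis $a(y,t)\geq \alpha(t)$ together with $T\geq 0$, I bound the remaining term below by $\alpha(t)F(t)$.

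Combining these two signs yields the differential inequality $F'(t)+\alpha(t)F(t)\leq 0$. Gronwall's lemma then gives
\begin{displaymath}
F(t)\leq F(0)\,e^{-\int_{0}^{t}\alpha(s)\,ds}
      =\Bigl(\int_{\Omega_{0}} T_{0}(x)\,dx\Bigr)\,e^{-\int_{0}^{t}\alpha(s)\,ds},
\end{displaymath}
which is exactly the stated bound. For the limit, the assumption $\liminf_{t\to\infty}\frac{1}{t}\int_{0}^{t}\alpha(s)\,ds>\alpha_{0}>0$ guarantees that $\int_{0}^{t}\alpha(s)\,ds\geq \tfrac{\alpha_{0}}{2}t$ for all sufficiently large $t$, so the exponential factor tends to zero and the proof is complete.

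The main obstacle is a cosmetic one: justifying that the Laplacian integral is non-positive. It is tempting to try an integration by parts that would land us in an energy identity for $T$ (rather than $T^{-}$ as in the maximum-principle proof), but that would introduce a term $\int|\nabla T|^{2}$ that we do not want in an $L^{1}$-type estimate. The correct move is to keep the boundary formulation $\int_{\partial\Omega(t)}\nabla T\cdot\vec n\,ds$ and exploit the sign of $T$ on the boundary directly; the smoothness of $T$ needed for this is guaranteed by the well-posedness result from the previous section.
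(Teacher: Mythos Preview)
Your argument is correct and follows essentially the same path as the paper's own proof: Reynolds' transport formula (\ref{EBDSV5}) with the Dirichlet condition to identify $F'(t)$ with $\int_{\Omega(t)}\partial_{t}T$, Green's formula for the Laplacian term, the sign of the outward normal derivative coming from the maximum principle, and Gronwall to close. The only cosmetic difference is that you invoke the maximum principle at the outset while the paper invokes it mid-proof, and your final decay step with $\alpha_{0}/2$ is slightly more careful than the paper's.
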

\begin{proof}
From  (\ref{EBDSV5}) 
\begin{displaymath}
\frac{d}{d t} \displaystyle\int_{\Omega(t)}^{} T(y,t)\,
dy = \displaystyle\int_{\Omega(t)}^{} \frac{\partial T}{\partial
  t}(y,t) \, dy +\displaystyle\int_{\partial \Omega(t)}^{} T(y,t)
\vec{V}(y)   \, d\vec{s}
\end{displaymath}
and since $T$ vanishes on the boundary, we have  
\begin{displaymath} \label{fer1}
\frac{d}{d t} \displaystyle\int_{\Omega(t)}^{} T(y,t)\,
dy  =\displaystyle\int_{\Omega(t)}^{} \frac{\partial T}{\partial
  t}(y,t) \, dy. 
\end{displaymath}
Using this, we integrate in  (\ref{leo2}) in  $ \Omega(t)$, to get 
\begin{displaymath}
\displaystyle\int_{\Omega(t) }^{} \frac{\partial T}{\partial t}(y,t)
\, dy - \displaystyle\int_{\Omega(t)}^{} \Delta T(y,t) \, dy +
\displaystyle\int_{\Omega(t)}^{} a(y,t) \ T(y,t) \, dy=0 .  
\end{displaymath}
Now Green's formula leads to 
\begin{displaymath}
\frac{d }{d t}\displaystyle\int_{\Omega(t)}^{} T (y,t) \, dy -
\displaystyle\int_{\partial\Omega(t)}^{} \frac{\partial T}{\partial
  \vec{n}} (y,t)\, d\vec{s} + \displaystyle\int_{\Omega(t)}^{} a(y,t)
\ T(y,t) \, dy = 0.  
\end{displaymath}

By the maximum prinicple we know that $T(y,t) \geq 0$ for $y\in
\Omega(t)$ and $t\geq 0$, and then for $y \in \partial\Omega(t)$ we
have $\frac{\partial T}{\partial \vec{n}} (y,t) \leq 0 $ and then 
\begin{displaymath}
\frac{d }{d t}\displaystyle\int_{\Omega(t)}^{} T (y,t) \, dy +
\displaystyle\int_{\Omega(t)}^{} a(y,t)  \ T(y,t) \, dy \leq 0. 
\end{displaymath}

Hence, denoting $\bar{Y}(t)=\displaystyle\int_{\Omega(t)}^{} T (y,t)
\, dy$ we have  
\begin{displaymath}
\frac{d \bar{Y} }{d t}(t) + \alpha(t) \bar{Y}(t) \leq 0 
\end{displaymath}
and Gronwall's lemma gives 
\begin{displaymath}
\bar{Y}(t)= \displaystyle\int_{\Omega(t)}^{} T (y,t) \, dy  \leq
e^{-\int_{0}^{t} \alpha (t) \, ds} \displaystyle\int_{\Omega_0}^{}
T_0(x) \, dx \xrightarrow[{t \to \infty }]{} 0.  
\end{displaymath}
since, by assumption  
\begin{displaymath}
\liminf_{t\rightarrow \infty} \frac{1} {t}     \displaystyle\int_{0}^{t} \alpha(s)\, ds > \alpha_0 > 0
\end{displaymath}
and then 
\begin{displaymath}
e^{-\int_{0}^{t} \alpha(s)\, ds}=e^{-t\left(\displaystyle\frac{1}{t}
    \int_{0}^{t} \alpha(s)\, ds\right)} \leq  e^{-\alpha_0
  t}\xrightarrow[{t \to \infty }]{} 0. 
\end{displaymath}
for $ t>> 1$. 
\end{proof}

Now without assuming sign on the solutions, we have 

\begin{proposition}

With the notations above, assume 
\begin{displaymath}
T_0 \in L^{2}(\Omega_0) 
\end{displaymath}
and the function 
\begin{displaymath}
\gamma(t) =  \alpha(t)  - C_0(\Omega(t)) , 
\end{displaymath}
is such that for some $\alpha_{1}>0$, 
\begin{displaymath}
\liminf_{t\rightarrow \infty} \frac{1} {t}
\displaystyle\int_{0}^{t} \gamma(s)\, ds > \alpha_1 > 0 , 
\end{displaymath}
where  $C_0(\Omega(t))$ is the  Poncair\`e constant in  $\Omega(t)$. 

Then 
\begin{displaymath}
0 \leq \displaystyle\int_{\Omega(t)}^{} T^2(y,t)\, dy \leq e^{- 2
  \int_{0}^{t} \gamma(s) \, ds} \displaystyle\int_{\Omega(t)}^{}
T^{2}_0(x)  \, dx  \xrightarrow[{t \to \infty }]{} 0 . 
\end{displaymath} 

\end{proposition}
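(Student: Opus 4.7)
The plan is to derive a scalar differential inequality for the quantity
$\bar Y(t) := \int_{\Omega(t)} T^2(y,t)\,dy$ and then apply Gronwall's lemma, in direct analogy with the preceding proposition. The new ingredient compared with that proof is Poincaré's inequality on $\Omega(t)$, which is precisely what produces the constant $C_0(\Omega(t))$ inside $\gamma(t)$.

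First I would apply the Reynolds transport formula (\ref{EBDSV5}) to the scalar field $T^2(y,t)$. Since $T$ vanishes on $\partial\Omega(t)$, so does $T^2$, and the boundary contribution drops out, giving
$$\frac{d}{dt}\int_{\Omega(t)} T^2(y,t)\,dy = 2\int_{\Omega(t)} T(y,t)\,\frac{\partial T}{\partial t}(y,t)\,dy.$$
Next I would multiply equation (\ref{leo2}) by $T(y,t)$, integrate over $\Omega(t)$, and apply Green's formula together with the homogeneous Dirichlet condition to rewrite the Laplacian term. Combining the two identities yields the energy equality
$$\frac{1}{2}\frac{d}{dt}\int_{\Omega(t)} T^2\,dy + \int_{\Omega(t)} |\nabla T|^2\,dy + \int_{\Omega(t)} a(y,t)\,T^2\,dy = 0.$$

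At this point I would use the hypothesis $a(y,t)\geq \alpha(t)$ on the zero-order term, and Poincaré's inequality on $\Omega(t)$ (with the sign convention for $C_0(\Omega(t))$ adopted in the statement) on the gradient term, so that the sum of the last two integrals is bounded below by $\gamma(t)\int_{\Omega(t)} T^2\,dy$. This produces the scalar inequality
$$\frac{d\bar Y}{dt}(t) + 2\gamma(t)\,\bar Y(t) \leq 0,$$
and Gronwall's lemma then gives
$$\bar Y(t) \leq e^{-2\int_0^t \gamma(s)\,ds}\,\bar Y(0) = e^{-2\int_0^t \gamma(s)\,ds}\int_{\Omega_0} T_0^2(x)\,dx.$$
Finally, exactly as in the previous proposition, the $\liminf$ assumption on $\gamma$ yields a bound of the form $e^{-2\int_0^t\gamma(s)\,ds}\le e^{-2\alpha_1 t}$ for $t\gg 1$, so the right-hand side tends to $0$ as $t\to\infty$; nonnegativity of $\bar Y(t)$ is automatic.

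The main technical point to be careful about is simply matching the form of Poincaré's inequality on the (possibly time-varying) domain $\Omega(t)$ with the constant $C_0(\Omega(t))$ used in the statement, so that the coefficient that emerges after absorbing both the diffusion and the lower bound for $a$ is exactly $\gamma(t) = \alpha(t) - C_0(\Omega(t))$; once this is arranged, the remainder of the argument is a routine Gronwall estimate identical in spirit to the previous proposition.
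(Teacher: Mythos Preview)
Your proposal is correct and follows essentially the same route as the paper: multiply (\ref{leo2}) by $T$, integrate over $\Omega(t)$, use (\ref{EBDSV5}) together with the Dirichlet condition to identify the time derivative of $\int_{\Omega(t)}T^2$, apply Green's formula and then Poincar\'e's inequality on $\Omega(t)$, and finish with Gronwall exactly as you describe. Your caution about matching the sign convention of $C_0(\Omega(t))$ is well placed, since with the Poincar\'e inequality as written in the paper one would naturally obtain $\alpha(t)+C_0(\Omega(t))$ rather than $\alpha(t)-C_0(\Omega(t))$; the paper simply records the combined coefficient as $\gamma(t)$ and proceeds.
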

\begin{proof}
Multiply  (\ref{leo2}) by  $T(y,t)$ and integrate in $\Omega(t)$, to get  
\begin{displaymath}
\displaystyle\int_{\Omega(t)}^{} \frac{\partial T}{\partial t}(y,t)
T(y,t) \, dy - \displaystyle\int_{\Omega(t)}^{} \Delta T(y,t) T(y,t)
\, dy + \displaystyle\int_{\Omega(t)}^{} a(y,t) \ T^{2}(y,t) \, dy = 0 . 
\end{displaymath}
Using (\ref{EBDSV5}), the boundary conditions and the Green's formula
we have 
\begin{displaymath}
\frac{1}{2}\frac{d}{ d t} \displaystyle\int_{\Omega(t)}^{} T^{2}(y,t) \, dy + \displaystyle\int_{\Omega(t)}^{}  \left | \nabla T(y,t) \right |^{2} \, dy + \displaystyle\int_{\Omega(t)}^{} a(y,t) T^2(y,t) \, dy= 0 .
\end{displaymath}
Now the Poincar\`e inequality in  $\Omega(t)$ gives for 
any  smooth function vanishing on $\partial \Omega(t)$, 
\begin{displaymath}
\left \| \nabla u \right \|^{2}_{L^{2}(\Omega(t))} \geq
C_0(\Omega(t)) \left \| u \right \|^{2}_{L^{2}(\Omega(t))}  . 
\end{displaymath}
This and the assumption on $a(y,t)$ leads to 
\begin{equation} 
\frac{1}{2}\frac{d}{ d t} \left \| T(.,t) \right
\|^{2}_{L^{2}(\Omega(t))}+ \gamma(t) \left \| T(.,t) \right
\|^{2}_{L^{2}(\Omega(t))} \leq 0. 
\label{fercho}
\end{equation}
Thus, denoting $\bar{Z}(t)=\left \| T(.,t) \right
\|^{2}_{L^{2}(\Omega(t))}$,  (\ref{fercho}) reads 
\begin{displaymath}
\frac{d }{d t} \bar{Z} (t) + 2 \gamma(t)  \bar{Z}(t) \leq 0
\end{displaymath} 
and Gronwall's lemma yields 
\begin{displaymath}
\bar{Z}(t) \leq \left \| T_0 \right \|_{L^2(\Omega_0)}^{2} e^{-2
  \int_{0}^{t} \gamma(s)\, ds} \xrightarrow[{t \to \infty }]{} 0. 
\end{displaymath}
since by assumption 
\begin{displaymath}
\liminf_{t\rightarrow \infty} \frac{1} {t}
\displaystyle\int_{0}^{t} \gamma(s)\, ds > \alpha_1 > 0 
\end{displaymath}
and then 
\begin{displaymath}
e^{-\int_{0}^{t} \gamma(s)\, ds}=e^{-t\left(\displaystyle\frac{1}{t}
    \int_{0}^{t} \gamma(s)\, ds\right)} \leq  e^{-\alpha_1
  t}\xrightarrow[{t \to \infty }]{} 0
\end{displaymath}
for $ t>> 1$. 
\end{proof}

\end{document}